\newdimen\headwidth
\newdimen\headrulewidth
\newcommand{\Bb}{\mathcal B}
\newcommand{\Cc}{\mathcal C}
\newcommand{\Ff}{\mathcal F}
\newcommand{\Pp}{\mathcal P}
\newcommand{\EE}{\mathbb E}
\newcommand{\NN}{\mathbb N}
\newcommand{\PP}{\mathbb P}
\newcommand{\RR}{\mathbb R}
\newcommand{\er} {\mathbb R}
\newcommand{\argmin}{\ensuremath{\operatorname{argmin}}}
\newcommand\1{\leavevmode\hbox{\rm \small1\kern-0.35em\normalsize1}}
\newcommand{\hypcoef}{({\it A}) }
\newcommand{\hypcoefi}{({\it $A_1$}) }
\newcommand{\hypcoefii}{({\it $A_2$}) }
\newcommand{\hyp}{({\it H}) }
\newcommand{\hypi}{({\it $H_1$}) }
\newcommand{\hypii}{({\it $H_2$}) }
\newcommand{\hypiii}{({\it $H_3$}) }
\theoremstyle{plain}
\newtheorem{theorem}{Theorem}[section]
\newtheorem{prop}[theorem]{Proposition}
\newtheorem{remark}[theorem]{Remark}
\newtheorem{lemma}[theorem]{Lemma}
\newtheorem{definition}[theorem]{Definition}
\newtheorem{corollary}[theorem]{Corollary}
\title{Diffusion processes with weak constraint through penalization approximation.}
\author{J.-F. Jabir\footnote{CIMFAV, Facultad de Ing., Universidad de Valpara\'iso, 222 General Cruz, Valpara\'iso, Chile; jean-francois.jabir@uv.cl. This author acknowledges the support of the FONDECYT INICIACI\'ON Project \textordmasculine  11130705, and the support of N\'ucleo Milenio MESCD.}}
\date\today
\begin{document}
\maketitle

\paragraph{Abstract:} In this paper, we investigate the construction of a diffusion process whose time-marginal densities are constrained to belong to a given set at all time. The construction is obtained from a penalization approximation to the constraint set, acting on the Wasserstein distance $W_2$ to the constraint space. Under some technical assumptions on the constraint space and the initial distribution of the model, the penalization approximation yields to a stochastic differential equation analogous to the Skorohod problem of reflected diffusion.\\ \\
\textbf{Keywords:} Constrained diffusion processes; Penalization method; Wasserstein space.\\
\textbf{AMS 2010 subject classifications}: 39A50, 35E10, 60J60.
\section{Introduction}
\subsection{General framework}
 In this work, we are interested in the construction of a couple of time-continuous stochastic processes $(X_{t},L_{t};\,0\leq t\leq T)$, defined up to an arbitrary finite time $T$, satisfying the following stochastic differential equation:
\begin{subequations}
\begin{equation}
X_{t}=X_{0}+\int_{0}^{t}b(s,X_{s})\,ds+\int_{0}^{t}\sigma(s,X_{s})\,dW_{s}+L_{t},\,X_0\sim\mu_0,\label{eq:DiffusionMotion}
\end{equation}
and satisfying the constraint that
\begin{equation}
\mbox{Law}(X_{t})\mbox{ belongs to }K,\,\mbox{for all}\,t\in(0,T],\label{eq:WeakConstraint-marginal}
\end{equation}
\end{subequations}
for $K$ a given subset of the space $\Pp(\er^d)$ of all probability measures defined on $\er^d$.
 In \eqref{eq:DiffusionMotion}-\eqref{eq:WeakConstraint-marginal}, $(W_{t};\,0\leq t\leq T)$ is a standard $\er^{d}$--Brownian motion, and the coefficients $b$ and $\sigma$ will be assumed to be given smooth functions (see the assumptions \hypcoefi and \hypcoefii below
 for the precise setting).

The system \eqref{eq:DiffusionMotion}-\eqref{eq:WeakConstraint-marginal} aims to describe, in a general way, a diffusion process submitted to a weak constraint, namely a
diffusion process $(X_t;\,0\leq t\leq T)$ whose time-marginal distributions are restricted to remain in a given subset of $\Pp(\er^d)$ and where $(L_{t};\,0\leq t\leq T)$ models a control component ensuring that the constraint \eqref{eq:WeakConstraint-marginal} is fulfilled. The terminology "weak constraint" refers here to a constraint on the law of the stochastic process in comparison to pathwise or strong constraint where paths of the process are constrained to remain in a given subdomain of $\er^d$.
 The time-marginal constraint \eqref{eq:WeakConstraint-marginal} can be seen as a very particular case of a more general class of weak conditioning involving path-distribution constraints of the form:
\begin{equation}
\mbox{Law}(X_{t};\,0\leq t\leq T)\in \mathbf{K},\,\mbox{for }\mathbf{K}\mbox{ a given subset of }\Pp(\Cc([0,T];\er^d)).\label{eq:WeakConstraint-complete}
\end{equation}
In this situation, the construction of diffusion processes satisfying a constraint of the form  \eqref{eq:WeakConstraint-complete}
appeared in various theoretical and applied situations such as in stochastic mechanics (see Cattiaux and L\'eonard \cite{CatLeo-95}); diffusion processes with conditioned initial-terminal distribution (see e.g. Baudoin \cite{Baudoin-02}, Mikami and Thieullen \cite{MikThi-08}, Tan and Touzi \cite{TanTou-13}); for the modeling of crowd motion with congestion phenomenon (Maury \textit{et al.} \cite{MauRouSan-10}, Santambrogio \cite{Santambrogio-15} and reference therein); Pdf methods for the simulation of incompressible turbulent flows (see Bossy {\it et al.} \cite{jabir-13a}),  ... The two latest reference feature very singular weak constraints and \cite{jabir-13a} was the initial motivation
of the present work. Let us also point out that Briand {\it et al.} \cite{BrChGuLa-16} addressed the well-posedness problem and particle approximation
of diffusion with mean reflection corresponding to the constraint ($K=\{\nu\in \,|\,\int h(x)\nu(dx)\geq 0\}$ for $h:\er^d\rightarrow \er$ a smooth function).
On a more general setting, C. L\'eonard has investigated in a series of articles (\cite{Leonard-03}, \cite{Leonard-08}, \cite{Leonard-10})
the general problem of minimization of entropy functional under linear constraints showing qualifications constraints for its dual formulations and its solvability.

Hereafter, our approach will be focused on the construction of a solution to \eqref{eq:DiffusionMotion}-\eqref{eq:WeakConstraint-marginal} through the introduction of an $\epsilon$-penalization approximation of the constraint \eqref{eq:WeakConstraint-marginal}, defined as follows: Given $\epsilon>0$, we consider $(X^\epsilon_t,L^\epsilon_t;\,t\in[0,T])$ satisfying
\begin{subequations}
\label{eq:PenalizedSDE}
\begin{align}
&X^\epsilon_t=X_0+\int_0^t b(s,X^\epsilon_s)\,ds+\int_0^t \sigma(s,X^\epsilon_s)\,dW_s+L^\epsilon_t,\label{eq:PenalizedSDEa}\\
&L^\epsilon_t=\int_0^t\frac{T^{\mu_\epsilon(s)\rightarrow \mu^K_\epsilon(s)}(X^\epsilon_s)-X^\epsilon_s}{\epsilon}\,ds,\label{eq:PenalizedSDEb}
\end{align}
\end{subequations}
where, for all $0\leq t\leq T$, \\
$\circ$ $\mu_\epsilon(t)$ is the distribution of $X^\epsilon_t$,\\
\noindent
$\circ$ $\mu^K_\epsilon(t)$ is the $W_2$-projection of $\mu_\epsilon(t)$ to the constraint space $K$ given by
\[
\mu^K_\epsilon(t)=\argmin_{\nu\in K}W_2(\mu_\epsilon(t),\nu),
\]
where $W_2$ is the Wasserstein distance with quadratic cost which, given two probability measures $\nu_0,\nu_1$, is defined as
\begin{equation}\label{eq:W2-bis}
W_2(\mu,\nu)=\sqrt{\inf_{X^\mu\sim\mu,\,X^\nu\sim \nu}\EE\left[|X_0-X_1|^2\right]},
\end{equation}
where the infimum is taken over all couples of random variables $(X^\mu,X^\nu)$ such that $\mbox{Law}(X^\mu)=\mu$ and $\mbox{Law}(X^\nu)=\nu$;

\noindent
$\circ$ $T^{\mu_\epsilon(t)\rightarrow \mu^K_\epsilon(t)}$ is the $W_2$-optimal transport mapping $\mu_\epsilon(t)$ towards $\mu^K_\epsilon(t)$; namely $T^{\mu_\epsilon(t)\rightarrow \mu^K_\epsilon(t)}$ is a Borel measurable $\er^d$-vector field such that
$\mu^K_\epsilon(t)=T^{\mu_\epsilon(t)\rightarrow\mu^K_\epsilon(t)}\#\mu_\epsilon(t)$ (of equivalently, the law of $T^{\mu_\epsilon(t)\rightarrow\mu^K_\epsilon(t)}$ under
$\mu^K_\epsilon(t)$ is equal to $\mu^K_\epsilon(t))$) and such that
\[
W_2(\mu_\epsilon(t),\mu^K_\epsilon(t))=\int \left|T^{\mu_\epsilon(t)\rightarrow \mu^K_\epsilon(t)}(x)-x\right|^2\mu_\epsilon(t,dx).
\]
The penalization system \eqref{eq:PenalizedSDE} features an formulation analog to classical penalized approximation for reflected diffusions or more generally for multivalued ODEs or SDEs (see Brezis \cite{Brezis-73}, Bernardin \cite{Bernardin-04}, C\'epa \cite{Cepa-95}, \cite{Cepa-98}, Slom\'inski \cite{Slominski-13}) meanwhile its construction relies strongly on the distance $W_2$ and its relationship with the theory of optimal mass transportation. Additionally,
the choice of $W_2$ to measure the penalization to a constraint set $K$ is here justified by the particular topological and convex properties and the related sub-differential and differential calculus defined on the space $(\Pp_2,W_2)$ developed in  Villani \cite{Villani-03}, \cite{Villani-09}, Ambrosio, Gigli and Savar\'e \cite{AmbGigSav-05}, Santambrogio  \cite{Santambrogio-15}. These notions will be discussed in Section \ref{sec:Preliminaries}.

\textbf{Notation:} \\
$\circ$ $\Pp(\er^d)$ will denote the set of probability measure defined on $(\er^d,\Bb(\er^d))$, and $\Pp^{ac}(\er^d)$ the subset of all probability measures absolutely continuous with respect to the Lebesgue measure. For any $\mu$ in $\Pp^{ac}(\er^d)$, $\frac{d\mu}{dx}$ is the probability density function of $\mu$ with respect to Lebesgue measure.

$\circ$ $\Pp_2$ [respectively $\Pp^{ac}_2$] will denote the subset of $\mu\in \Pp(\er^d)$ [$\mu\in \Pp^{ac}(\er^d)$] with finite second moments ($\int |x|^2\mu(dx)<\infty$).

$\circ$ For any probability measure $\mu$ defined on $\er^d$ and any $\er^d$-vector field $T$, $T\#\mu$ will denote the push-forward of $\mu$ along $T$ that is
\[
\int f(x) T\#\mu(dx) =\int f(T(x))\,\mu(dx),\,f\in \Cc_b(\er^d).
\]
$\circ$ We will denote by $L^2(\mu)$ the space of $\er^d$-vector fields such that $\int |T(x)|^2\mu(dx)<\infty$.

$\circ$ For any probability measure $\mu$ in $\Pp_2(\er^d)$, we define the $W_2$-distance of $\mu$ to $K$ as
\begin{equation}\label{def:DistanceK}
W_2(\mu,K)=\inf_{\nu\in K}W_2(\mu,\nu)
\end{equation}
and the projection $\mu^K$ of $\mu$ on $K$ as the minimizer of
\begin{equation}\label{def:ProjK}
\inf_{\nu\in K} W_2(\mu,\nu)
\end{equation}
whenever this minimizer is unique.

$\circ$ Given a probability measure $\mu$, we will often denote by $X^\mu$ an arbitrary random variable such that $X^\mu\sim\mu$.

$\circ$ $\Vert \Vert_{Lip}$ will denote the Lipschitz norm $\Vert f\Vert_{Lip}=\sup_{x,y\in\er^d,\,x\neq y}\left|f(x)-f(y)\right|/\left|x-y\right|$,
and $\Vert \Vert_{\infty}$ the supremum norm: $\Vert f\Vert_{\infty}=\sup_{x\in\er^d}|f(x)|$.

\subsection{Main results}
 The main results of this article concern the existence and uniqueness of a solution to \eqref{eq:PenalizedSDE} and the study of its behavior as the penalization order goes to $0$. Both problems will be considered under the following assumptions:

\noindent
\textbf{Assumptions on $b$ and $\sigma$}: The drift vector $b:(0,T)\times\er^{d}\rightarrow \er^{d}$ and the diffusion matrix $\sigma:(0,T)\times\er^{d}\rightarrow \er^{d}\times\er^{d}$ are Borel-measurable functions satisfy the following assumptions \hypcoef:
\begin{description}
\item[\hypcoefi] $b$ and $\sigma$ are bounded $\Cc^\infty$-functions with bounded first derivatives;
\item[\hypcoefii] $a=\sigma\sigma^{\star}$ is uniformly elliptic; that is there exists $\lambda>0$ such that
\begin{equation*}
\lambda|\eta|^{2}\leq \eta \cdot a(t,z) \eta,\,\forall \eta\in\er^{d}.
\end{equation*}
\end{description}
\noindent
\textbf{Assumptions on the constraint space $K$ and the initial distribution $\mu_0$}:
\begin{description}
\item[\hypi] $\mu_0$ is in $\Pp^{ac}_2(\er^d) \cap K$, the intersection being implicitly assumed to be non-empty.
\item[\hypii] $K$ is a closed subset of $\Pp(\er^{d})$ (equipped with the weak topology) and one of the two following conditions hold true: for all $\nu_0,\nu_1\in K$ and $X^{\nu_0}\sim \nu_0$, $X^{\nu_1}\sim \nu_1$, we have the family of measures
\[
\mbox{Law}\left((1-\alpha)X^{\nu_0}+\alpha X^{\nu_1}\right)
\]
belongs to $K$ for all $0\leq \alpha\leq 1$.
\item[\hypiii] The set
\begin{equation*}
\mbox{Int}(K):=\left\{
\begin{aligned}
&\mbox{The set of }\nu\in\Pp(\er^d)\,\mbox{such that there exists}\,r>0,\,Y\sim \nu\,\mbox{defined on some probability space} (\Omega,\Ff,\PP),\\
&\mbox{for which, for all r.v. }X:\Omega\rightarrow \er^d\mbox{ such that }\PP-\mbox{a.s.}\,|X|\leq 1,\mbox{ then }\,\mbox{Law}(Y+rX)\mbox{ is in } K.
\end{aligned}
\right\}
\end{equation*}
is not empty.
\end{description}

\begin{remark} Before stating the main results of this paper, let us comment the preceding assumptions \hypcoef and \hyp. \hypcoef ensures some simple preliminary properties on the distribution of the non-penalized process (that is \eqref{eq:PenalizedSDE} with $L^\epsilon \equiv 0$) and may be weakened for other type of hypo-elliptic generator.
The assumption \hypii feature topological and convex properties on the constraint set which ensure the existence and uniqueness of the $K$-projection $\mu^K$ of any measure $\mu$ in $\Pp(\er^d)$ (see Lemma \ref{lem:ProjK} below).

The assumption \hypiii can be understood as a non-empty interior assumption, usually assumed for reflected ODEs and SDEs (see e.g. \cite{Brezis-73}), in the sense that we are assuming that there exists at least one probability measure which can be freely transported in any direction $x+r f(x)$ within $K$ through a certain class of vector fields $T$.
 Examples of sets satisfying the assumptions \hypii, and \hypiii will be shown at the end of Section \ref{sec:Preliminaries}.
\end{remark}

On the wellposedness problem of \eqref{eq:PenalizedSDE} we then have the following result:

\begin{theorem}\label{thm:Main1} Assume that \hypcoef, \hypi, \hypii hold true. Then there exists a unique solution, in the pathwise sense, to \eqref{eq:PenalizedSDE}
\end{theorem}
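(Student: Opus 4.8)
The plan is to set up a Picard-type fixed-point argument in the space of flows of probability measures, exploiting the fact that the penalization term in \eqref{eq:PenalizedSDEb} is a \emph{functional of the marginal law} $\mu_\epsilon(t)$ alone. First I would recast \eqref{eq:PenalizedSDE} as a McKean--Vlasov SDE: given a continuous curve $t\mapsto m(t)\in\Pp_2(\er^d)$, define the drift correction
\begin{equation*}
c(t,x,m(t)):=\frac{T^{m(t)\rightarrow m(t)^K}(x)-x}{\epsilon},
\end{equation*}
and consider the classical (non-McKean) SDE
\begin{equation*}
\bar X_t=X_0+\int_0^t b(s,\bar X_s)\,ds+\int_0^t c(s,\bar X_s,m(s))\,ds+\int_0^t\sigma(s,\bar X_s)\,dW_s,
\end{equation*}
which under \hypcoefi has a unique strong solution once $m$ is frozen (note $b,\sigma$ are Lipschitz, and for each fixed $s$ the map $x\mapsto c(s,x,m(s))$ is the optimal transport map minus identity, hence Borel measurable and, crucially, \emph{$1$-Lipschitz up to a factor $1/\epsilon$}: the $W_2$-optimal map onto a convex-type target coming from \hypii has the property that $x\mapsto T^{m\to m^K}(x)$ is the gradient of a convex function, but more simply one uses that both sides are in $L^2$ and solves pathwise via Girsanov since $c$ is bounded when the flow stays in a bounded region — this boundedness is the point addressed below). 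Then define $\Phi(m)(t):=\mathrm{Law}(\bar X_t)$ and look for a fixed point $m=\mu_\epsilon(\cdot)$.

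The key steps, in order, are: (1) show $\Phi$ maps $C([0,T];\Pp_2)$ into itself — continuity in $t$ follows from the SDE having finite second moments uniformly on $[0,T]$, using \hypcoefi to bound $\EE|\bar X_t|^2$ by Gr\"onwall, together with the fact, from Lemma \ref{lem:ProjK}, that $W_2(m(t),m(t)^K)\le W_2(m(t),\mu_0)$ (since $\mu_0\in K$ by \hypi), so the penalization drift has controlled $L^2(m(t))$-norm; (2) establish a contraction estimate: for two frozen curves $m^1,m^2$, couple the two SDEs with the same Brownian motion and the same $X_0$, apply It\^o to $|\bar X^1_t-\bar X^2_t|^2$, and bound the difference of the penalization drifts by the stability of the optimal transport map — this is where one needs that $m\mapsto T^{m\to m^K}(\cdot)$ is Lipschitz (in an $L^2$ sense) with respect to $W_2$, giving $\|c(s,\cdot,m^1)-c(s,\cdot,m^2)\|\lesssim \frac1\epsilon W_2(m^1(s),m^2(s))$; conclude $W_2(\Phi(m^1)(t),\Phi(m^2)(t))^2\le C_\epsilon\int_0^t W_2(m^1(s),m^2(s))^2\,ds$, so that a high iterate of $\Phi$ is a contraction and has a unique fixed point on $[0,T]$; (3) pathwise uniqueness: given two solutions $(X^\epsilon,L^\epsilon)$ and $(\tilde X^\epsilon,\tilde L^\epsilon)$ on the same probability space with the same $W$, their marginal flows both solve the fixed-point equation, hence coincide by step (2), and then the SDE \eqref{eq:PenalizedSDEa} with that common frozen flow has pathwise unique solutions under \hypcoefi, whence $X^\epsilon=\tilde X^\epsilon$ and consequently $L^\epsilon=\tilde L^\epsilon$.

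The main obstacle is step (2), the quantitative stability of the composite map $m\mapsto \big(x\mapsto T^{m\to m^K}(x)-x\big)$ measured in $L^2$ against $W_2(m^1,m^2)$. One must chain two stability facts: first, that the $W_2$-projection onto $K$ is (at least) $1$-Lipschitz in $W_2$, which follows from \hypii via a convexity/obliqueness argument for projections onto convex sets in the length space $(\Pp_2,W_2)$ (this is the content one expects from Lemma \ref{lem:ProjK}); and second, that the optimal transport maps themselves vary continuously in $L^2$ when both the source and target move a little in $W_2$ — a statement of Gigli-type stability of optimal maps, valid here because the sources $\mu_\epsilon(t)$ are absolutely continuous (propagated from \hypi through the uniformly elliptic, smooth SDE via \hypcoefii, which keeps the law in $\Pp^{ac}_2$ with a density, so Brenier's theorem applies and optimal maps are genuine functions). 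Care is needed because a priori the solution flow need not stay absolutely continuous unless one first checks that $\Phi$ preserves $\Pp^{ac}_2$; I would therefore restrict the fixed-point space to curves in $\Pp^{ac}_2$ and verify this invariance using the smoothing effect of the non-degenerate diffusion. Once these stability estimates are in place, the contraction and hence Theorem \ref{thm:Main1} follow by the standard iteration argument; note that \hypiii is not needed here and is reserved for the $\epsilon\to0$ analysis.
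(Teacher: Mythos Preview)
Your Picard/fixed-point scheme is a genuinely different route from the paper, and the gap lies exactly where you flagged the ``main obstacle''. The contraction in step~(2) requires a \emph{quantitative} $L^2$-Lipschitz estimate for the composite map $m\mapsto T^{m\to m^K}(\cdot)-\mathrm{Id}$, and this is not available: Gigli-type stability of Brenier maps is only qualitative (convergence of maps when source and target converge in $W_2$), and there is no general bound of the form $\Vert T^{m^1\to\nu^1}-T^{m^2\to\nu^2}\Vert_{L^2}\le C\big(W_2(m^1,m^2)+W_2(\nu^1,\nu^2)\big)$. A secondary difficulty is that for a \emph{frozen} curve $m$ the drift $c(s,x,m(s))=(T^{m(s)\to m(s)^K}(x)-x)/\epsilon$ is neither Lipschitz nor pointwise bounded in $x$ (Brenier's map is the gradient of a convex function, hence monotone but possibly discontinuous), and it is only defined $m(s)$-a.e.; since in the iteration $\bar X_s$ does \emph{not} have law $m(s)$, evaluating $c$ along $\bar X$ is ill-posed, and neither the Lipschitz theory nor Girsanov applies as written.

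The paper bypasses both issues. For existence it does \emph{not} freeze the flow but instead freezes the penalization term on a time grid (so on $[t_n,t_{n+1})$ the drift uses $T^{\mu_\epsilon(t_n)\to\mu^K_\epsilon(t_n)}(X_{t_n}^{N,\epsilon})$, a fixed random variable, avoiding the off-support evaluation problem), derives uniform-in-$N$ moment and H\"older bounds, extracts a limit by tightness, and shows the limit has absolutely continuous marginals via an entropy argument relative to the unconstrained diffusion. For uniqueness it uses the monotonicity property of the penalization (Corollary~\ref{coro:Monotone1}): for two solutions $X^1,X^2$ on the same space one has directly
\[
\EE\Big[(X^1_s-X^2_s)\cdot\big((T^{\mu_1(s)\to\mu_1^K(s)}(X^1_s)-X^1_s)-(T^{\mu_2(s)\to\mu_2^K(s)}(X^2_s)-X^2_s)\big)\Big]\le 0,
\]
so the penalization cross term in It\^o's formula has the right sign \emph{without any stability estimate for the transport maps}. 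Gronwall then gives $\EE|X^1_t-X^2_t|^2=0$, and Yamada--Watanabe upgrades weak existence plus pathwise uniqueness to strong wellposedness. The monotonicity of the (sub)differential of $\mu\mapsto W_2^2(\mu,K)$ under \hypii is the key lemma you are missing; it replaces, and is strictly weaker than, the Lipschitz stability your contraction would need.
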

Let us point out that \eqref{eq:PenalizedSDE} provide an original stochastic differential equation as the drift component depends implicitly on the time-marginal distribution of $(X^\epsilon_t;\,0\leq t\leq T)$. Since the optimal transport $T^{\mu_\epsilon(t)\rightarrow \mu^K_\epsilon(t)}$ is simply the identity function $x\in\er^d\mapsto x$ whenever $\mu_\epsilon(t)$ belongs to $K$, the component $(L^\epsilon_t;\,0\leq t\leq T)$ acts only when the distribution of $X^\epsilon_t$ is outside the constraint domain $K$ and the direction of $dL^\epsilon_t/dt$ points out $\mu_\epsilon(t)$ towards $K$ with minimal energy cost conditioning the distribution to lie in $K$ as $\epsilon $(see Section \ref{sec:Limit}). This reformulation will be more discussed in Section \ref{sec:Preliminaries}, as well as some monotone properties related to $\partial W^2_2(\mu,K)$ (see Section \ref{sec:Preliminaries}).
The wellposedness problem related to \eqref{eq:PenalizedSDE} will be handled by formally recasting the SDE into a multivalued SDE of the form
\begin{equation*}
\left\{
\begin{aligned}
&-dX^\epsilon_t+b(t,X^\epsilon_t)\,dt+\sigma(t,X^\epsilon_t)\,dW_t\in \frac{1}{2\epsilon}\bm{\partial} W^2_2(\mu^\epsilon(t),K),\\
&X_0\sim \mu_0,
\end{aligned}
\right.
\end{equation*}
where $\bm{\partial} W^2_2(\mu,K)$ is the sub-differential of $\mu\mapsto W^2_2(\mu,K)$ in the sense of \cite{AmbGigSav-05} (see Definition \ref{def:Subdifferentials} below),
and by exhibiting some monotone properties related to $\partial W^2_2(\mu,K)$ (see Corollary \ref{coro:Monotone1} in Section \ref{sec:Preliminaries}). Furthermore
$\mu\mapsto W^2_2(\mu,K)/2\epsilon$ can be seen as a Moreau-Yosida approximation of the (convex) indicator of $K$:
\begin{equation}
\label{eq:Indicator}
\delta_{K}(\mu)=\left\{
\begin{aligned}
&0\,\mbox{if}\,\mu\in K,\\
&+\infty\,\mbox{otherwise}.
\end{aligned}
\right.
\end{equation}
As the penalization order $\epsilon$ tend to $0$, it should be expected that the natural limit $(X_t,L_t;\,0\leq t\leq T)$ to \eqref{eq:PenalizedSDE} is given by the form
\begin{equation}
\label{eq:MultivaluedSDE}
\left\{
\begin{aligned}
&-dX_t+b(t,X_{t})\,dt+\sigma(t,X_{t})\,dW_{t}\in \bm{\partial} \delta_{K}(\mu(t))d|L|_t,\\
&\mu(t)=Law(X_{t}),
\end{aligned}
\right.
\end{equation}
where $(|L|_t;\,0\leq t\leq T)$ corresponds to the total variation of $(L_t;\,0\leq t\leq T)$ and $\bm{\partial}\delta_{K}(\mu)$ consists in all vector fields
$\xi:\er^d\rightarrow\er^d$ such that
\begin{equation*}
0\geq \int \xi(x)\cdot (T(x)-x)\,\mu(dx),\,\mbox{for all}\,T:\er^d\rightarrow\er^d\,\mbox{such that}\,T\#\mu\,\mbox{is in}\,\mathbf{K},
\end{equation*}
The general formulation of \eqref{eq:MultivaluedSDE} features an analog form of the stochastic differential equation with strong (path) constrained
(\cite{Brezis-73}, \cite{LioSzn-84}, \cite{Cepa-98}, \cite{Bernardin-04}), here adapted to
the particular framework of the weak constraint.

More rigorously, we show that

\begin{theorem}\label{thm:Main2} Assume that \hypcoef, \hypi, \hypii and \hypiii hold true. Then, as $\epsilon$ tends to $0$,
$(X^\epsilon_t,L^\epsilon_t;\,0\leq t\leq T)$ converges weakly toward a couple of processes $(X_t,L_t;\,0\leq t\leq T)$, defined on some filtered
probability space $(\Omega,\Ff, (\Ff_t;\,0\leq t\leq T),\PP)$, such that

$(i)$ $\PP$-a.s. $t\mapsto X_t$ is continuous and
\[
X_{t}=X_0+\int_0^t b(s,X_s)\,ds+\int_0^t\sigma(s,X_s)\,dW_s+L_t,\,t\in[0,T],
\]
where $X_0\sim \mu_0$ and $(W_t;\,0\leq t\leq T)$ is a $\er^d$-Brownian motion;

$(ii)$ for all $t$, $\mu(t):=\text{Law}(X_t)$ belongs to $K$;

$(iii)$ $(L_t;\,0\leq t\leq T)$ is a continuous process with bounded variations such that, for all $\Ff_t$-adapted continuous process $(Y_t;\,0\leq t\leq T)$ such that, for all $0\leq t\leq T$, $\mbox{Law}(Y_t)$ is in $K$, we have
\begin{align*}
\EE_\PP\left[\int_0^T\left(Y_{s}-X_s\right)\cdot dL_r\right]\leq 0.
\end{align*}
\end{theorem}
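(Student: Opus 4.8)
The plan is to obtain $(X_t,L_t)$ as a weak limit of the penalized family $(X^\epsilon_t,L^\epsilon_t)$ by a compactness-plus-identification argument, following the classical scheme used for penalized approximations of reflected diffusions (C\'epa, Slom\'inski, Bernardin) but carried out in the Wasserstein setting. First I would establish the a priori estimates that are uniform in $\epsilon$: using \hypcoef one controls $\sup_{t\le T}\EE|X^\epsilon_t|^2$, and then the crucial point is a uniform bound on the total variation of $L^\epsilon$, i.e. on $\int_0^T \epsilon^{-1}\,W_2(\mu_\epsilon(s),\mu^K_\epsilon(s))\,ds$ or rather on $\EE\int_0^T \epsilon^{-1}|T^{\mu_\epsilon(s)\to\mu^K_\epsilon(s)}(X^\epsilon_s)-X^\epsilon_s|\,ds$. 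This is where \hypiii enters: picking a measure $\nu^\ast\in\mathrm{Int}(K)$ with radius $r$, one tests the dynamics against $\nu^\ast$ (transported suitably) and uses the monotonicity of $\bm\partial W_2^2(\cdot,K)$ from Corollary \ref{coro:Monotone1} to show that the reflecting-type term does net negative work against directions pointing into the ball around $\nu^\ast$, which converts into an $O(1)$ bound on $|L^\epsilon|_T$; this mirrors the standard ``interior point'' trick for Skorohod problems. I expect this total-variation estimate to be the main obstacle, since it requires carefully combining the sub-differential inequality for $W_2^2(\cdot,K)$ with the free-transport property defining $\mathrm{Int}(K)$, and controlling the optimal maps $T^{\mu_\epsilon(s)\to\mu^K_\epsilon(s)}$ which are only measurable.

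With these bounds in hand, I would prove tightness of the laws of $(X^\epsilon,L^\epsilon,W)$ in $\Cc([0,T];\er^d)^3$: tightness of $X^\epsilon$ follows from the moment bounds and a Kolmogorov-type increment estimate (the drift, martingale, and $L^\epsilon$ parts are each controlled), tightness of $L^\epsilon$ follows from the uniform total-variation bound via the Arzel\`a--Ascoli criterion for bounded-variation paths with uniformly small moduli (one also gets equicontinuity from the uniform $L^1$ bound on $dL^\epsilon/dt$ combined with uniform integrability), and $W$ is fixed. Along a subsequence, Prokhorov gives a weak limit, and by Skorohod's representation theorem I may assume a.s. convergence on a common probability space; the limit $W$ remains a Brownian motion with respect to the limiting filtration, and passing to the limit in the linear SDE \eqref{eq:PenalizedSDEa} (using continuity of $b$ and $\sigma$ and convergence of stochastic integrals, e.g. via the martingale problem or a uniform-integrability argument for the It\^o term) yields part $(i)$, with $L_t$ the a.s. limit of $L^\epsilon_t$.

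For part $(ii)$ I would argue that $W_2(\mu_\epsilon(t),K)\to 0$: from the total-variation bound, $\int_0^T W_2(\mu_\epsilon(s),\mu^K_\epsilon(s))\,ds = O(\epsilon)$, and combined with the Lipschitz-in-time regularity of $s\mapsto\mu_\epsilon(s)$ (from the moment/increment bounds) one upgrades this to $\sup_{t}W_2(\mu_\epsilon(t),K)\to 0$; since $\mu_\epsilon(t)\to\mu(t)$ weakly (and with second moments, hence in $W_2$ after a uniform-integrability check) and $K$ is closed for the weak topology by \hypii, the limit $\mu(t)$ lies in $K$. For part $(iii)$, boundedness of variation of $L$ is inherited from the uniform bound by lower semicontinuity of total variation under uniform convergence. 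The inequality $\EE\int_0^T(Y_s-X_s)\cdot dL_s\le 0$ for admissible test processes $Y$ is the limiting form of the penalized identity: for each $\epsilon$, $dL^\epsilon_s = \epsilon^{-1}(T^{\mu_\epsilon(s)\to\mu^K_\epsilon(s)}(X^\epsilon_s)-X^\epsilon_s)\,ds$, and the defining inequality of $\bm\partial\delta_K$ together with the characterization of the optimal map gives $\EE[(Y_s-X^\epsilon_s)\cdot dL^\epsilon_s]\le \EE[(Y_s - \mu^K_\epsilon(s)\text{-transport of }X^\epsilon_s)\cdot dL^\epsilon_s] + (\text{error }O(W_2(\mu_\epsilon(s),K)/\epsilon)\cdot\text{t.v.})$, where the first term is $\le 0$ by the sub-differential inequality for $\delta_K$ applied along the convex combinations from \hypii, and the error integrates to something that vanishes; passing to the limit using the a.s. convergence of $(X^\epsilon,L^\epsilon)$ and continuity of $Y$ gives the claim. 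The subtle points to be careful about are the joint convergence of $X^\epsilon_s$ and $dL^\epsilon_s$ inside the integral (handled by writing $L^\epsilon$ as an absolutely continuous process and using weak convergence of the pair in path space plus uniform integrability of the densities) and the measurability/selection issues for the optimal transport maps, which are resolved by Lemma \ref{lem:ProjK} guaranteeing uniqueness of $\mu^K_\epsilon(s)$ and Brenier-type regularity of the transport once $\mu_\epsilon(s)\in\Pp_2^{ac}$.
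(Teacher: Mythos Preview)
Your overall architecture---uniform estimates via the interior-point trick from \hypiii, tightness, Skorohod representation, identification of the limit---matches the paper's. The difference, and the place where your proposal has a genuine gap, is the tightness of $(L^\epsilon)$.

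You assert that tightness of $L^\epsilon$ in $\Cc([0,T];\er^d)$ follows from the uniform total-variation bound ``via the Arzel\`a--Ascoli criterion'' and that ``one also gets equicontinuity from the uniform $L^1$ bound on $dL^\epsilon/dt$ combined with uniform integrability''. But a uniform bound on $\EE[|L^\epsilon|_T]$ does \emph{not} yield equicontinuity: nothing prevents $dL^\epsilon_s/ds$ from being of order $\epsilon^{-1}$ on a set of times of Lebesgue measure $O(\epsilon)$, and the available estimates (Lemma~\ref{lem:ConvMargin} gives $\epsilon^{-1}\int_0^T W_2^2(\mu_\epsilon(s),K)\,ds\le C$, hence only $\epsilon\int_0^T\EE|dL^\epsilon_s/ds|^2\,ds\le C$) give no uniform integrability of $dL^\epsilon/dt$ in $L^1(dt\otimes d\PP)$. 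So Arzel\`a--Ascoli in $\Cc$ is not available, and this is exactly the obstruction that forces a different compactness argument.

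The paper handles this by the random time-change technique of C\'epa \cite{Cepa-98} and Kurtz \cite{Kurtz-91} (a Meyer--Zheng type argument): set $\theta^\epsilon(t)=t+\EE[|L^\epsilon|_t]$, let $\tau^\epsilon$ be its inverse, and consider the time-changed processes $\widehat X^\epsilon_t=X^\epsilon_{\tau^\epsilon(t)}$, $\widehat L^\epsilon_t=L^\epsilon_{\tau^\epsilon(t)}$, etc. After the time change, $\widehat L^\epsilon$ is $1$-Lipschitz in $L^1$ uniformly in $\epsilon$ (since $d\tau^\epsilon/dt=(1+\EE[|dL^\epsilon/dr|_{\tau^\epsilon}])^{-1}$), so the hatted quadruple is tight in $\Cc([0,T];\er^{4d})$. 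One passes to the limit in the time-changed picture, then inverts the time change using \cite[Lemma~2.3(b)]{Kurtz-91}, which requires checking that the limit $\tau_0$ is strictly increasing so that the un-hatted limit processes are continuous. Your identification steps for $(i)$, $(ii)$ (via Lemma~\ref{lem:ConvMargin} and closedness of $K$) and $(iii)$ (via Proposition~\ref{prop:Monotone2} applied at each $s$ and passed to the limit) are otherwise in line with the paper, but they have to be carried out on the time-changed processes first and then pulled back.
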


The paper is organized as follows: The next section is dedicated to a short account of some general topological properties of the space $(\Pp_2,W_2)$ and the related notions of (sub-)differential calculus which will be used to construct \eqref{eq:MultivaluedSDE}. In Section \ref{sec:PenalizedWellposedness}, we prove that the wellposedness result stated in Theorem \ref{thm:Main1} under \hypcoef, \hypi and \hypii. Next, Section \ref{sec:Limit} is dedicated to the proof of the limit behavior as $\epsilon\rightarrow 0^+$ of \eqref{eq:PenalizedSDE}.

\noindent

%
%

\section{Some recalls on the space $(\Pp_2,W_2)$ and preliminaries for the study of \eqref{eq:PenalizedSDE}}\label{sec:Preliminaries}

Over the past twenty years, the metric $W_2$, its link with the theory of optimal transportation and the particular topological and geometrical properties it ensure on $\Pp(\er^d)$ have been the subject of intensive and fruitful investigations and applications in various fields such as fluid mechanics, differential geometry, functional inequalities, gradient  flows equations on the space of probability measures and variational principle for nonlinear pdes, ... (see e.g. \cite{AmbGigSav-05}, and references therein). A particular feature related to transport of measures was the convex properties of functionals defined  $(\Pp_2,W_2)$ and differential calculus along variations of transported measures were first exhibited in McCann \cite{McCann-97}, Jordan, Kinderlehr and  Otto \cite{JorKinOtt-98}, Otto \cite{Otto-01} and more intensively studied in Villani \cite{Villani-03}, \cite{Villani-09}, and Ambrosio, Gigli and Savar\'e \cite{AmbGigSav-05}. Recently, Santambrogio  \cite{Santambrogio-15} provides similar properties along variations of probability measures.

More recently, a similar calculus of variations on the space $(\Pp_2,W_2)$ was introduced in Cardaliaguet \cite{Cardaliaguet-13} and Carmona and Delarue \cite{CarDel-15}, following the ideas of P.-L. Lions \cite{Lions-06}, in the general framework of Mean Field games and controlled McKean-Vlasov systems.  was considered in \cite{Lions-06}, \cite{Cardaliaguet-13}, \cite{CarDel-15} which provides There, derivatives of a functional $F:\Pp_2(\er^d)\rightarrow\er$ is obtained from the
\textit{lifting} of $F$; namely considering, on a probability space $(\Omega,\Ff,\PP)$ where $\Omega$ is a Polish space, $\Ff$ its Borel $\sigma$-algebra and $\PP$ is a atomeless Borel probability measure on $(\Omega,\Ff)$, the functional $\tilde{F}:L^2(\Omega,\Ff,\PP)\rightarrow\er$ which assigns to any random variable $X:\Omega \rightarrow\er^d$ such that $\PP\circ X^{-1}=\mu$ the value $\tilde{F}(X):=F(\mu)$. As pointed out by [\cite{Cardaliaguet-13}, Section $6$] and \cite{CarDel-15}, the choice of $(\Omega,\Ff,\PP)$ and of the
representant $X^\nu$ of $\nu$ is arbitrary and the functional $X\in L^2(\Omega,\Ff,\PP)\mapsto \tilde{F}(X)$ only depends on the law of $X$. This lifting technique provides a
particular suitable framework for stochastic calculus and some probabilistic interpretation of the various notion of convexity on the space $(\Pp_2,W_2)$ which will be used below.

In this section, we recall the notions of convex functionals on the space $(\Pp_2,W_2)$ and further sub-differential calculus on the space $(\Pp_2,W_2)$,
developed in \cite{Villani-03}, \cite{AmbGigSav-05}, \cite{Villani-09}, \cite{Santambrogio-15},
as well as some basic analytical properties related to the metric $W_2$.
The two last subsections are dedicated to a preliminary study of the functionals $\mu\mapsto W^2_2(\mu,K)$ and some simple
examples of constraint spaces satisfying the assumptions \hypii and \hypiii.

\subsection{Convex functionals and sub-differential calculus on $(\Pp_2,W_2)$}

The notion of convex functional $F:\Pp(\er^d)\rightarrow \er$ has been considered in various forms: Hereafter, a functional $F:\Pp(\er^d)\rightarrow \er$ will be said to be
\textbf{classically convex} if, for all $\mu,\nu$ in $\Pp(\er^d)$,
\[
F\left((1-\alpha)\mu+\alpha\nu\right)\leq (1-\alpha)F\left(\mu\right)+\alpha F\left(\nu\right),\,\forall\,\alpha\in[0,1].
\]
and a functional $F:\Pp(\er^d)\rightarrow \er$ will be said to be \textbf{$\lambda$-displacement convexity} for $\lambda$ in $\er$ (also referred to as \textbf{$\lambda$-geodesical convexity} in \cite{AmbGigSav-05}): if, for all $\mu,\nu$ in $\Pp(\er^d)$ and $T:\er^d\rightarrow\er^d$ a Borel vector field in $L^2(\mu;\er^d)$, for any $\alpha$ in $[0,1]$,
\[
F\left(T_\alpha\#\mu\right)\leq (1-\alpha)F\left(\mu\right)+\alpha F\left(T\#\mu\right)+\frac{\lambda}{2}\alpha(1-\alpha)W^2_2(\mu,\nu),
\]
where $T_\alpha$ is the convex interpolation
\[
T_\alpha(x)=(1-\alpha)x+\alpha T(x).
\]
The notion of displacement convex functional was first considered in McCann \cite{McCann-97} in order to characterize equilibrium states for the distribution of gas particles, and latter applied to define and construct steepest-descent schemes and gradient flows equations on $(\Pp_2,W_2)$ (see e.g. \cite{AmbGigSav-05} and references therein).

Let us point out that the notion of displacement convex translates in a very intuitive way as a classical notion of convexity in terms of "lifting" as
a functional $F:\Pp_2\rightarrow\er$ is displacement convex i.f.f. its lifting $\tilde{F}:L^2(\Omega,\Ff,\PP)\rightarrow\er$ is convex in the sense that, given $\mu,\nu\in\Pp_2$,
\[
\tilde{F}((1-\alpha)X^\mu+\alpha X^\nu)\leq (1-\alpha)  \tilde{F}(X^\mu)+\alpha \tilde{F}(X^\nu)
\]
for all $X^\mu\sim\mu$ and $X^\nu\sim\nu$, provided that $\mu$ is in $\Pp^{ac}_2$.
The class of functionals satisfying the displacement convexity property includes:
 \begin{description}
 \item[Potential energy:] $F(\mu)=\int V(x)\mu(dx)$ for $V$ a strongly convex function;
 \item[Interaction energy:] $F(\mu)=\int W(x-y)\mu(dx)\mu(dy)$ for $W:\er^d\rightarrow\er$ a strongly convex symmetric function;
 \item[Internal energy:] $F(\mu)=\int e(\frac{d\mu}{dx}(x)) dx$ for $e:(0,\infty)\rightarrow \er$ a Borel measurable function such that $e(0)=0$ and such that $r\in (0,\infty)\mapsto r^d e(r^{-d})$ is a convex non-increasing function;
\end{description}
(We refer the interested reader to [\cite{Villani-03}, Theorem $5.15$, Chapter 5], [\cite{AmbGigSav-05}, Chapter $7$ and Section $9.1$] and [\cite{Santambrogio-15}, Section $7.2$] for proof details.) Among these three examples, external and internal energy functionals are also classically convex meanwhile interaction energy do not exhibit such behavior.

\paragraph{Sub-differential calculus on $(\Pp_2,W_2)$:}
As different notions of convexity were introduced, different notions of variations and subsequent sub-differential calculus can be considered.
\begin{definition}\label{def:Subdifferentials}
$\circ$ Given a functional $F:\Pp(\er^d)\rightarrow  \er$ and $\mu$ in $\mbox{dom}(F):=\{\nu\in\Pp_2(\er^d)\,|\,|F(\nu)|<\infty\}$, then the sub-differential
$\partial F(\mu)$ is the set of scalar functions $\phi:\er^d\rightarrow\er$ belonging to the dual space $\Pp'(\er^d)$ and such that
\begin{equation}
F(\nu)-F(\mu)\geq \int \phi(x)\left(\nu(dx)-\mu(dx)\right),\,\forall\,\nu\in\Pp(\er^d).\label{eq:VerticalSubDerivativesFormula}
\end{equation}
$\circ$ Given a functional $F:\Pp_2(\er^{d})\rightarrow \er$ and $\mu\in dom(F)$, we define by $\bm{\partial}F(\mu)$ as the subset of all $\er^d$-vector fields
 $\xi$ in $L^2(\mu)$ such that
\begin{equation}
F(T\#\mu)-F(\mu)\geq \int \xi(x)\cdot \left(T(x)-x\right) \mu(dx),\,\forall\,T:\er^d\rightarrow\er^d.\label{eq:HorizontalSubDerivativesFormula}
\end{equation}
\end{definition}
The first notion of sub-differential consider variations along probability measures and the second definition is based on variations along $L^2$-transportation of measures.
 was first introduced in the seminal work Jordan, Kinderlehr and Otto \cite{JorKinOtt-98} and later deeply studied in \cite{AmbGigSav-05}.
 Let us observe that the notions of sub-differential are connected in the following way: assuming then that $\phi\in\partial F(\mu)$ is a $\Cc^1$-convex function on $\er^d$, it follows that
\begin{align*}
F(T\#\mu)-F(\mu)\geq \int \left(\phi(T(x))-\phi(x)\right)\mu(dx)\geq \int \nabla\phi(x)\cdot (T(x)-x) \mu(dx).
\end{align*}
Therefore, in a very heuristic form, we have that $\nabla\partial F(\mu)\subset\bm{\partial}F(\mu)$.
\subsection{Recall on some fundamental properties of the distance $W_2$:} Given two probability measures $\mu$ and $\nu$ in $\Pp(\er^d)$, $W_2(\mu,\nu)$, defined as in \eqref{eq:W2-bis}, measures the transportation cost of transporting $\mu$ towards $\nu$ relatively to the quadratic distance function $|x-y|^2$. Equivalently $W_2(\mu,\nu)$ formulates as
\begin{equation}\label{eq:W2}
W_2(\mu,\nu)=\sqrt{\inf_{\pi\in\Pi(\mu,\nu)}\int_{\er^d\times\er^d}|x-y|^2\pi(dx,dy)}
\end{equation}
where $\Pi(\nu_0,\nu_1)$ is the set of all couplings between $\mu$ and $\nu$; namely the set of all probability measures $\pi$ defined on $\er^d\times\er^d$ such that
\[
\int_{A\times\er^d}\pi(dx,dy)=\mu(A),\,\,\int_{\er^d\times B}\pi(dx,dy)=\nu(B),\,A,B\in\Bb(\er^d).
\]
Whenever $\mu$ and $\nu$ are in $\Pp_2(\er^d)$, the existence of a minimizing coupling $\underline{\pi}$ to the optimal transportation problem \eqref{eq:W2}
is always ensured (see more general case in \cite{Villani-09}).

 Assuming additionally that $\mu$ is absolutely continuous with respect to the Lebesgue measure, then (see e.g. [\cite{Villani-03}, Knott - Smith's and Brenier's Theorems 2.12]) there exists
 a mapping $T^{\mu\rightarrow \nu}:\er^d\rightarrow \er^d$ transporting $\mu$ towards $\nu$ (namely $T^{\mu\rightarrow \nu}\#\mu=\nu$) such that the optimal coupling $\underline{\pi}$ has full support on
 the $\{(x,y)\in\er^d\times\er^d\,|\,y=T^{\mu\rightarrow \nu}(x)\}$ and
 \begin{equation}
W^2_2(\mu,\nu)=\int \left|\nabla T^{\mu\rightarrow\nu}(x)-x\right|^2\,\mu(dx).\label{eq:MongeW2}
\end{equation}
The function $T^{\mu\rightarrow\nu}:\er^d\rightarrow \er^d$, known as Brenier's map, and defines an optimal transportation map between $\mu$ and $\nu$ in the sense
\[
\int \left|\nabla T^{\mu\rightarrow\nu}(x)-x\right|^2\,\mu(dx)=\inf \left\{\int \left|T(x)-x\right|^2\,\mu(dx)\,|\,T:\er^d\rightarrow\er^d\,\mbox{s.t.}\,T\#\mu =\nu\right\}.\label{eq:MongePb}
\]
The optimal transport $T^{\mu\rightarrow\nu}$ is further characterized as the sub-differential of the convex function
 $T^{\mu\rightarrow\nu}(x)=x-\nabla \Phi^{\mu\rightarrow\nu}(x)$ for (Lebesgue) a.e. $x$ in $\er^d$ where $\Phi^{\mu\rightarrow\nu}$ is the so called Kantorovich potential related to the dual formulation
 of $W_2$:
\begin{equation}
\label{eq:KantorovichPotential}
\begin{aligned}
W_2^2\left(\mu,\nu\right)&=\sup\left\{\int \Phi(x)\mu(dx)+\int \Psi(y)\nu(dy)\,|\,\Phi,\Psi \in \Cc_b(\er^d),\,\Phi(x)+\Psi(y)\leq |x-y|^2,\,\forall\,x,y\in\er^d\right\}\\
&=\int \Phi^{\mu\rightarrow\nu}(x)\mu(dx)+\int \Psi^{\nu\rightarrow\mu}(y)\nu(dy).
\end{aligned}
\end{equation}
Coming back to the probabilistic form \eqref{eq:W2-bis}, this result yields that, for any representant $X^\mu$ of $\mu$ ($X^\mu\sim \mu$)
defined on some probability space $(\Omega,\Ff,\PP)$, the distance $W_2(\mu,\nu)$ is given by
\begin{equation}\label{eq:W2Lifting}
W_2^2\left(\mu,\nu\right)=\EE_\PP\left[\left|T^{\mu\rightarrow\nu}(X^\mu)-X^\mu\right|^2\right].
\end{equation}
The topology generated by $W_2$ on $\Pp_2(\er^d)$ is stronger than the weak topology in $\Pp(\er^d)$ in the sense, for $\{\nu_n\}_{n\in\NN}$
 be a sequence of probability measures in $\Pp_2(\er^d)$, $\lim_{n\rightarrow +\infty}W_2(\nu_n,\nu)=0$
 if and only if $\nu_n$ converges weakly toward $\nu$ and $\lim_{n}\int |x|^2\nu_n(dx)=\int |x|^2\nu(dx)$.

Let us now comment on the convex properties of the functional $W_2$. For $\gamma$ a fixed probability measure in $\Pp^{ac}_2(\er^d)$, the $W_2$-distance to $\gamma$ defined by
 \[
 F_\gamma:\mu\in\Pp_2(\er^d)\mapsto F_\gamma(\mu)=W^2_2(\mu,\gamma),
 \]
 possess different convex properties. $F_\gamma$ is (classically) strictly convex (see [\cite{Santambrogio-15}, Proposition $7.19$]) and convex
 along generalized geodesics with basis $\gamma$ in $\Pp^{ac}_2(\er^d)$ ([\cite{AmbGigSav-05}, Section $9.2$]), but only the opposite distance $-F_\gamma$ is
 $\lambda$-displacement convex ([\cite{AmbGigSav-05}, Theorem $7.3.2$]).

\begin{corollary}\label{coro:ConvexProperties}[Convex properties of $F_\mu:\nu\mapsto W^2_2(\mu,\nu)$] Given $\mu\in\Pp_2(\er^d)$, the functional $\nu\in\Pp_2(\er^d)\mapsto -F_\mu(\nu)$ is displacement convex. If $\mu$ is in $\Pp^{ac}_2(\er^d)$ then, for any $\nu_0,\nu_1$ in $\Pp_2(\er^d)$, denoting by
$T^{\mu\rightarrow \nu_0}$ the $W_2$-optimal transport between $\mu$ and $\nu_0$, and by $T^{\mu\rightarrow \nu_1}$ the $W_2$-optimal coupling between $\mu$ and $\nu_1$, we have
\begin{equation}\label{eq:ConvexityalongGenGeo}
F_\mu(T^{\mu;\nu_0\rightarrow \nu_1}_\alpha\#\mu)\leq (1-\alpha)F_\mu(\nu_0)+\alpha F_\mu(\nu_1)-\alpha(1-\alpha)W^2_2(\nu_0,\nu_1),
\end{equation}
for
\[
T^{\mu;\nu_0\rightarrow \nu_1}_\alpha(x)=(1-\alpha)T^{\mu\rightarrow \nu_0}(x)+\alpha T^{\mu\rightarrow \nu_1}(x).
\]
In addition, $F_\mu$ is classically convex in the sense that, given $\mu\in\Pp^{ac}_2(\er^d)$, $\nu_0,\nu_1$ in $\Pp_2(\er^d)$, for $\nu_\alpha=(1-\alpha)\nu_0+\alpha\nu_1$, $0\leq \alpha\leq 1$,
\[
F_\mu(\nu_\alpha)\leq (1-\alpha)F_\mu(\nu_0)+\alpha F_\mu(\nu_1),
\]
and equality occurs if and only if $\nu_0=\nu_1$.
\end{corollary}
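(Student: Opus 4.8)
The three statements are the lifting reformulations of the convexity properties of $W^2_2$ recalled just before the statement, namely [\cite{AmbGigSav-05}, Theorem 7.3.2 and Section 9.2] and [\cite{Santambrogio-15}, Proposition 7.19], and the plan is to obtain all of them from the elementary identity
\[
\bigl|(1-\alpha)a+\alpha b\bigr|^{2}=(1-\alpha)|a|^{2}+\alpha|b|^{2}-\alpha(1-\alpha)|a-b|^{2},\qquad a,b\in\er^{d},\ \alpha\in[0,1],
\]
combined with the representation \eqref{eq:W2Lifting} of $W_2$ and with Brenier's theorem. For the first assertion, fix $\mu,\rho_0,\rho_1\in\Pp_2(\er^{d})$ and a Borel map $T$ with $T\#\rho_0=\rho_1$, set $T_\alpha=(1-\alpha)\mathrm{id}+\alpha T$, and let $(Z,V)$ realise the optimal coupling of $(\mu,T_\alpha\#\rho_0)$, so $\EE|Z-V|^{2}=W^2_2(\mu,T_\alpha\#\rho_0)$. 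Disintegrating $\rho_0$ along $T_\alpha$ I would enlarge the probability space to carry $X\sim\rho_0$ with $T_\alpha(X)=V$ a.s.; applying the identity with $a=Z-X$ and $b=Z-T(X)$ --- so that $(1-\alpha)a+\alpha b=Z-T_\alpha(X)=Z-V$ and $a-b=T(X)-X$ --- taking expectations and using $W^2_2(\mu,\rho_0)\le\EE|Z-X|^{2}$, $W^2_2(\mu,\rho_1)\le\EE|Z-T(X)|^{2}$, one gets
\[
W^2_2(\mu,T_\alpha\#\rho_0)\ \ge\ (1-\alpha)W^2_2(\mu,\rho_0)+\alpha W^2_2(\mu,\rho_1)-\alpha(1-\alpha)\int|T(x)-x|^{2}\,\rho_0(dx),
\]
which, read for $-F_\mu$, is precisely its $\lambda$-displacement convexity with $\lambda=2$. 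The only non-routine point here is the disintegration/measurable-selection producing $X$.

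For \eqref{eq:ConvexityalongGenGeo}, now with $\mu\in\Pp^{ac}_2(\er^{d})$, Brenier's theorem gives the optimal maps $T^{\mu\rightarrow \nu_0}$, $T^{\mu\rightarrow \nu_1}$, and by \eqref{eq:W2Lifting} $W^2_2(\mu,\nu_i)=\EE|T^{\mu\rightarrow \nu_i}(Z)-Z|^{2}$ for $Z\sim\mu$. With $a=T^{\mu\rightarrow \nu_0}(Z)-Z$ and $b=T^{\mu\rightarrow \nu_1}(Z)-Z$ one has $T^{\mu;\nu_0\rightarrow \nu_1}_\alpha(Z)-Z=(1-\alpha)a+\alpha b$ and $a-b=T^{\mu\rightarrow \nu_0}(Z)-T^{\mu\rightarrow \nu_1}(Z)$, so the identity yields $\EE\bigl|T^{\mu;\nu_0\rightarrow \nu_1}_\alpha(Z)-Z\bigr|^{2}=(1-\alpha)W^2_2(\mu,\nu_0)+\alpha W^2_2(\mu,\nu_1)-\alpha(1-\alpha)\EE|a-b|^{2}$; it then suffices to use $W^2_2(\mu,T^{\mu;\nu_0\rightarrow \nu_1}_\alpha\#\mu)\le\EE\bigl|T^{\mu;\nu_0\rightarrow \nu_1}_\alpha(Z)-Z\bigr|^{2}$ (a coupling bound) and $\EE|a-b|^{2}\ge W^2_2(\nu_0,\nu_1)$ (since $(T^{\mu\rightarrow \nu_0}(Z),T^{\mu\rightarrow \nu_1}(Z))$ couples $\nu_0$ and $\nu_1$), which is exactly the claimed inequality.

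For the last part, classical convexity is immediate by mixing optimal plans: if $\pi_i\in\Pi(\mu,\nu_i)$ is optimal then $(1-\alpha)\pi_0+\alpha\pi_1\in\Pi(\mu,\nu_\alpha)$, hence $W^2_2(\mu,\nu_\alpha)\le(1-\alpha)W^2_2(\mu,\nu_0)+\alpha W^2_2(\mu,\nu_1)$. For the strict inequality when $0<\alpha<1$ and $\nu_0\ne\nu_1$ I would argue by contradiction: as $\mu\in\Pp^{ac}_2(\er^{d})$ the optimal plan of $(\mu,\nu_\alpha)$ is unique and induced by $T^{\mu\rightarrow \nu_\alpha}$, so equality would force $(1-\alpha)\pi_0+\alpha\pi_1$ to be map-induced; its disintegration along the first marginal is $\mu$-a.e. $(1-\alpha)\delta_{T^{\mu\rightarrow \nu_0}(x)}+\alpha\delta_{T^{\mu\rightarrow \nu_1}(x)}$, which is a Dirac mass only if $T^{\mu\rightarrow \nu_0}=T^{\mu\rightarrow \nu_1}$ $\mu$-a.e., giving $\nu_0=T^{\mu\rightarrow \nu_0}\#\mu=T^{\mu\rightarrow \nu_1}\#\mu=\nu_1$, a contradiction; alternatively one simply quotes [\cite{Santambrogio-15}, Proposition 7.19]. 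The conditional-Dirac step here and the disintegration lift in the first part are the only places I expect to require genuine care.
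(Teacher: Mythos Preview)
Your proof is correct and follows essentially the same route as the paper: both derive all three assertions from the elementary Hilbert identity $|(1-\alpha)a+\alpha b|^2=(1-\alpha)|a|^2+\alpha|b|^2-\alpha(1-\alpha)|a-b|^2$ applied to suitably chosen representants, together with Brenier's theorem and the coupling bound for $W_2$. The only variations are in how the representants are produced: for the displacement convexity of $-F_\mu$ the paper picks $X^{\nu_1}=T^{\nu_0\rightarrow\nu_1}(X^{\nu_0})$ and then chooses $X^\mu$ so that $((1-\alpha)X^{\nu_0}+\alpha X^{\nu_1},X^\mu)$ is optimal, whereas you start from the optimal pair $(Z,V)$ and disintegrate $\rho_0$ along $T_\alpha$ to recover $X$ (your route avoids the implicit assumption $\nu_0\in\Pp^{ac}_2$ needed for $T^{\nu_0\rightarrow\nu_1}$ to exist); for the strict classical convexity the paper mixes via an auxiliary Bernoulli variable and argues that equality would make $T^{\mu\rightarrow\nu_\alpha}$ multivalued, while your disintegration argument---$(1-\alpha)\delta_{T^{\mu\rightarrow\nu_0}(x)}+\alpha\delta_{T^{\mu\rightarrow\nu_1}(x)}$ is a Dirac only when the two maps coincide---is the same contradiction expressed more transparently.
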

The displacement convexity of the opposite distance $-F_\gamma$ was obtained in [\cite{AmbGigSav-05}, Theorem $7.3.2$] and the property that
 $F_\gamma$ is (classically) strictly convex was proved [\cite{Santambrogio-15}, Proposition $7.19$]). The second notion of convexity, also referred as convexity
 along generalized geodesics with basis $\gamma$ in $\Pp^{ac}_2(\er^d)$, was shown in [\cite{AmbGigSav-05}, Section $9.2$]) in order to exhibit the convex properties of $\nu\mapsto W^2_2(\mu,\nu)$ along a particular class of interpolation between probability measures. We provide below an alternative proof of \cite{AmbGigSav-05} which relies on a appropriate lifting of the functional $F_\mu:\nu\mapsto W^2_2(\mu,\nu)$.

\begin{proof}[Proof of Corollary \ref{coro:ConvexProperties}] Consider $\tilde{F}:L^2(\Omega,\Ff,\PP)\times L^2(\Omega,\Ff,\PP) \rightarrow\er$ given by
\[
\tilde{F}(X,Y)=\EE_\PP\left[\left|X-Y\right|^2\right]
\]
which is the lifting of the functional $F:\Pp_2(\er^d)\times\Pp_2(\er^d)\rightarrow\er$ defined by
\[
F(\mu,\nu)=\int|x-y|^2\pi(dx,dy),\,\mu=Law(X),\,\nu=Law(Y),
\]
for $\pi=\PP\circ(X,Y)^{-1}$. Given $\mu\in \Pp^{ac}_2$ and $\nu_0,\nu_1$ in $\Pp_2(\er^d)$, the hilbertian identity yields that
\begin{equation}\label{ProofStep1g}
 \begin{aligned}
 &\EE_\PP\left[\left|(1-\alpha)X^{\nu_0}+\alpha X^{\nu_1}-X^\mu\right|^2\right]\\
 &=(1-\alpha)\EE_\PP\left[\left|X^{\nu_0}-X^\mu\right|^2\right]+\alpha
 \EE_\PP\left[\left| X^{\nu_1}-X^\mu\right|^2\right]-\alpha(1-\alpha)\EE_\PP\left[\left|X^{\nu_0}-X^{\nu_1}\right|^2\right],
 \end{aligned}
 \end{equation}
 for any $X^\mu$, $X^{\nu_0}$, $X^{\nu_1}$ defined on $(\Omega,\Ff,\PP)$. Taking $X^{\nu_i}=T^{\mu\rightarrow \nu_i}(X^\mu),\,i=0,1$,
 immediately yields that
 \begin{align*}
 &W^2_2(T^{\mu;\nu_0\rightarrow \nu_1}_\alpha\#\mu,\mu)\\
 &\leq \EE_\PP\left[\left|(1-\alpha)X^{\nu_0}+\alpha X^{\nu_1}-X^\mu\right|^2\right]%
 =(1-\alpha)W^2_2(\mu,\nu_0)+\alpha W^2_2(\mu,\nu_1)-\alpha(1-\alpha)\EE_\PP\left[\left|X^{\nu_0}-X^{\nu_1}\right|^2\right]\\
 &\leq (1-\alpha)W^2_2(\mu,\nu_0)+\alpha W^2_2(\mu,\nu_1)-\alpha(1-\alpha)W^2_2(\nu_0,\nu_1).
 \end{align*}
 On the other hand, choosing, in \eqref{ProofStep1g}, $X^{\nu_1}=T^{\nu_0\rightarrow \nu_1}(X^{\nu_0})$ and $X^\mu$ so that the couple of random variables
 $((1-\alpha)X^{\nu_0}+\alpha T^{\nu_0\rightarrow \nu_1}(X^{\nu_0}),X^\mu)$ achieves the optimal coupling between $\nu_\alpha=((1-\alpha)x+\alpha T^{\nu_0\rightarrow \nu_1}(x))\#\nu_0$ and $\mu$, gives
 \begin{align*}
 &W^2_2(\nu_\alpha,\mu)=\EE_\PP\left[\left|(1-\alpha)X^{\nu_0}+\alpha X^{\nu_1}-X^\mu\right|^2\right]\\
 &=(1-\alpha)\EE_\PP\left[\left|X^{\nu_0}-X^\mu\right|^2\right]+\alpha
 \EE_\PP\left[\left| X^{\nu_1}-X^\mu\right|^2\right]-\alpha(1-\alpha)W^2_2(\nu_0,\nu_1)\\
 &\geq (1-\alpha)W^2_2(\mu,\nu_0)+\alpha W^2_2(\mu,\nu_1)-\alpha(1-\alpha)W^2_2(\nu_0,\nu_1).
 \end{align*}
 Finally, to prove the strict classical convexity of $F_\mu$, let $(X^{\mu},X^{\nu_0})$ be a couple of r.v. achieving the optimal coupling of $W_2(\mu,\nu_0)$ and $(\tilde{X}^{\mu},\tilde{X}^{\nu_1})$ be a couple of r.v. achieving the optimal coupling of $W_2(\mu,\nu_0)$. In addition, let $\beta$ be a Bernouilli r.v. with parameter $\alpha$, independent of $(X^{\mu},X^{\nu_0},\tilde{X}^{\mu},\tilde{X}^{\nu_1})$. Then, since $(1-\beta)X^{\nu_0}+\beta \tilde{X}^{\nu_1}$ is a representant of $\nu_\alpha$ and $(1-\beta)X^{\mu}+\beta \tilde{X}^{\mu}$ is a representant of $\mu$,
 \begin{align*}
 (1-\alpha)W^2_2(\mu,\nu_0)+\alpha W^2_2(\mu,\nu_1)&=(1-\alpha)\EE\left[|X^\mu-X^{\nu_0}|^2\right]+\alpha\EE\left[|X^\mu-X^{\nu_1}|^2\right]\\
 &=\EE\left[|(1-\beta)X^\nu_0+\beta \tilde{X}^{\nu_1}-(1-\beta)X^\mu-\beta \tilde{X}^\mu|^2\right]\\
 &\geq W^2_2(\mu,\nu_\alpha).
 \end{align*}
 The equality holds true if and only if the optimal coupling between $\mu$ and $\nu_\alpha$ is achieved with
 \[
 \left((1-\beta)X^\mu-\beta \tilde{X}^\mu,(1-\beta)X^\nu_0+\beta \tilde{X}^{\nu_1}\right),
 \]
 or equivalently, since $\mu$ is in $\Pp^{ac}_2$, with
 \[
 \left((1-\beta)X^\mu-\beta \tilde{X}^\mu,(1-\beta)T^{\mu\rightarrow \nu_0}(\tilde{X}^{\mu})+\beta T^{\mu\rightarrow \nu_1}(\tilde{X}^{\mu})\right).
 \]
 But, according to the representation formula \eqref{eq:W2Lifting}, this immediately implies that, $\PP$-a.s.,
 \[
 (1-\beta)T^{\mu\rightarrow \nu_0}(\tilde{X}^{\mu})+\beta T^{\mu\rightarrow \nu_1}(\tilde{X}^{\mu})=T^{\mu\rightarrow \nu_\alpha}((1-\beta)X^\mu+\beta \tilde{X}^{\mu})
 \]
 and that $T^{\mu\rightarrow \nu_0}$ is a multivalued function, which is in contradiction with the characterization \eqref{eq:MongeW2} of the optimal transportation between $\mu$ and $\nu_\alpha$.
 \end{proof}

Let us finally, the (approximate) sur-differentiability and differentiability of $\mu\mapsto W_2(\mu,\gamma)$ was obtained in [\cite{AmbGigSav-05}, Chapters $7$ and $10$]
and [\cite{Santambrogio-15}, Proposition $7.17$]) observed that the sub-differential $\partial F_\gamma(\mu)$ is reduced to the Kantorovich potential $\psi^{\mu\rightarrow \gamma}$ related to the transport of $\mu$ to $\gamma$; the proof of this result is obtained from the dual formulation of the $W_2$-optimal coupling problem and simply follows from the inequality:
\begin{equation}
\label{eq:SantambrogioSubDiff}
\begin{aligned}
&W^{2}_2(\nu,\gamma)-W^{2}_2(\mu,\gamma)\\
&=\sup\left\{\int \psi(x)\nu(dx)+\int \psi^*(x)\gamma(dx)\right\}-\sup\left\{\int \psi(x)\mu(dx)+\int \psi^*(x)\gamma(dx)\right\}\\
&\geq \int \psi^{\mu\rightarrow\gamma}(x)\left(\nu(dx)-\mu(dx)\right),
\end{aligned}
\end{equation}
for all $\nu$ in $\Pp(\er^d)$ with compact support.
\subsection{On the functional $\mu\mapsto W^2_2(\mu,K)$}
Let us begin this subsection, with some comments on the projection $\mu^K$ of $\mu$ on $K$ given by \eqref{def:ProjK}, under the assumption \hypii.
\begin{lemma}\label{lem:ProjK} Assume that $K$ satisfies either \hypii. Then, for all $\mu\in \Pp^{ac}_2(\er^d)$, there exists a unique probability measure $\mu^K$ in $\Pp_2(\er^d)$ such that
\[
W_2(\mu,\mu^K)=\inf\left\{W^2_2(\mu,\nu)\,|\,\nu\in K\right\}.
\]
In addition, for any r.v. $X^\mu$ defined on $(\Omega,\Ff,\PP)$ such that $X^\mu\sim\mu$, there exists a unique r.v. $X^{\mu^K}\sim\mu^K$ for which
\[
\EE_\PP\left[|X^\mu-X^{\mu^K}|^2\right]\leq \EE_\PP\left[|X^\mu-Y|^2\right],
\]
for all r.v. $Y:\Omega\rightarrow \er^d$ such that $\mbox{Law}(Y)$ is in $K$.
\end{lemma}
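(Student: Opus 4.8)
The plan is to prove existence by the direct method of the calculus of variations, uniqueness from the convexity of $F_\mu:\nu\mapsto W^2_2(\mu,\nu)$ recorded in Corollary \ref{coro:ConvexProperties} combined with the convexity property \hypii, and then to deduce the lifted statement from Brenier's theorem (available since $\mu\in\Pp^{ac}_2$). \emph{For existence}, let $(\nu_n)_{n}\subset K$ be a minimizing sequence for $\inf_{\nu\in K}W_2(\mu,\nu)$ (finite as soon as $K\cap\Pp_2\neq\emptyset$). Since $\mu\in\Pp_2$, the triangle inequality for $W_2$ gives $\big(\int|x|^2\nu_n(dx)\big)^{1/2}\le W_2(\mu,\nu_n)+\big(\int|x|^2\mu(dx)\big)^{1/2}$, so the $\nu_n$ have uniformly bounded second moments; by Markov's inequality the family is tight, hence by Prokhorov's theorem a subsequence $\nu_{n_k}$ converges weakly to some $\nu^\star$, and $\nu^\star\in K$ because $K$ is weakly closed. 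The weak lower semicontinuity of $W_2$ then yields $W_2(\mu,\nu^\star)\le\liminf_k W_2(\mu,\nu_{n_k})$, so $\nu^\star$ attains the infimum, and $W_2(\mu,\nu^\star)<\infty$ with $\mu\in\Pp_2$ forces $\nu^\star\in\Pp_2$. Set $\mu^K:=\nu^\star$.

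\emph{For uniqueness of $\mu^K$}, suppose $\nu_0,\nu_1\in K$ both realize $m:=W_2(\mu,K)$. Fix $X^\mu\sim\mu$ and put $X^{\nu_i}:=T^{\mu\rightarrow\nu_i}(X^\mu)$, a representative of $\nu_i$ since $\mu\in\Pp^{ac}_2$. By \hypii the measure $\nu_\alpha:=\mathrm{Law}\big((1-\alpha)X^{\nu_0}+\alpha X^{\nu_1}\big)=T^{\mu;\nu_0\rightarrow\nu_1}_\alpha\#\mu$ lies in $K$ for each $\alpha\in[0,1]$, whence $F_\mu(\nu_\alpha)\ge m$; on the other hand \eqref{eq:ConvexityalongGenGeo} gives $F_\mu(\nu_\alpha)\le(1-\alpha)m+\alpha m-\alpha(1-\alpha)W^2_2(\nu_0,\nu_1)$. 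Comparing the two bounds yields $\alpha(1-\alpha)W^2_2(\nu_0,\nu_1)\le 0$, i.e. $\nu_0=\nu_1$. (If $K$ is classically convex instead, one argues identically with the affine interpolation $(1-\alpha)\nu_0+\alpha\nu_1\in K$ and the strict classical convexity of $F_\mu$ from Corollary \ref{coro:ConvexProperties}.)

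\emph{For the lifted statement}, fix $X^\mu\sim\mu$ on $(\Omega,\Ff,\PP)$. For every $\nu\in K$, \eqref{eq:W2Lifting} gives $\inf_{Y\sim\nu}\EE_\PP[|X^\mu-Y|^2]=W^2_2(\mu,\nu)$, attained at $Y=T^{\mu\rightarrow\nu}(X^\mu)$; minimizing over $\nu\in K$ shows $\inf\{\EE_\PP[|X^\mu-Y|^2]:\mathrm{Law}(Y)\in K\}=W^2_2(\mu,\mu^K)$, attained at $X^{\mu^K}:=T^{\mu\rightarrow\mu^K}(X^\mu)$. If $Y$ is any minimizer, then $W^2_2(\mu,K)\le W^2_2(\mu,\mathrm{Law}(Y))\le\EE_\PP[|X^\mu-Y|^2]=W^2_2(\mu,K)$, so all three coincide: $\mathrm{Law}(Y)=\mu^K$ by the uniqueness just proved, and $(X^\mu,Y)$ is an optimal coupling of $(\mu,\mu^K)$; since $\mu\in\Pp^{ac}_2$ the optimal coupling is unique and carried by the graph of the Brenier map, so $Y=T^{\mu\rightarrow\mu^K}(X^\mu)=X^{\mu^K}$ $\PP$-a.s.

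I expect the main obstacle to be the uniqueness step: one must align the abstract hypothesis \hypii — which only asserts that laws of convex combinations of \emph{arbitrary} representatives of $\nu_0,\nu_1$ stay in $K$ — with the specific generalized-geodesic interpolant $T^{\mu;\nu_0\rightarrow\nu_1}_\alpha\#\mu$ entering the sharp inequality \eqref{eq:ConvexityalongGenGeo}, which is legitimate precisely because that interpolant is realized as the law of a convex combination of representatives coupled through $\mu$. The remaining ingredients — weak lower semicontinuity of $W_2$ and uniqueness of the optimal plan for an absolutely continuous source — are standard.
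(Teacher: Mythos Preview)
Your proof is correct and follows essentially the same route as the paper: existence via the direct method (tightness from bounded second moments, weak lower semicontinuity of $W_2$, closedness of $K$), uniqueness via the generalized-geodesic convexity inequality \eqref{eq:ConvexityalongGenGeo} together with \hypii, and the lifted claim via the Brenier map $X^{\mu^K}=T^{\mu\rightarrow\mu^K}(X^\mu)$. You supply a bit more detail than the paper does---in particular your argument for the \emph{uniqueness} of the minimizing random variable $X^{\mu^K}$ (via uniqueness of the optimal coupling when the source is absolutely continuous) is spelled out, whereas the paper states it but does not justify it explicitly.
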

\begin{proof}
Given $\mu$ in $\Pp_2(\er^d)$, any minimizing sequence $\{\nu_n\}_{n\in\NN}$ of
\[
(P)\,\,\,\inf\left\{W^2_2(\mu,\nu)\,|\,\nu\in K\right\},
\]
has uniformly integrable first moment. Since $\nu\mapsto W_2(\nu,\mu)$ is lower semi-continuous for the weak convergence in $\Pp(\er^d)$,
the closedness of $K$ is sufficient to ensure the existence of at least one minimizer $\mu^K$ solving $(P)$.
Under \hypii, assume that there exist two different minimizers $\mu^K_1$ and $\mu^K_2$. Then as $\mu$ is in $\Pp^{ac}_2$, the optimal transport $T^{\mu\rightarrow \nu_i}$ mapping $\mu$ towards $\mu^K_i$ for $i=1,2$, exist. Applying \eqref{eq:ConvexityalongGenGeo} in Lemma \ref{coro:ConvexProperties}, we get
\begin{align*}
F_\mu(T^{\mu;\mu^K_1\rightarrow \mu^K_2}_\alpha\#\mu)&\leq (1-\alpha)F_\mu(\mu^K_1)+\alpha F_\mu(\mu^K_2)-\alpha(1-\alpha)W^2_2(\mu^K_1,\mu^K_2)
&<(1-\alpha)F_\mu(\mu^K_1)+\alpha F_\mu(\mu^K_2)
\end{align*}
as $\mu^K_1$ and $\mu^K_2$ are distincts.
Due to \hypii-(b), $\mbox{Law}\left((1-\alpha)T^{\mu\rightarrow \mu^K_1}(X^\mu)+\alpha T^{\mu\rightarrow \mu^K_2}(X^\mu)\right)=T^{\mu;\mu^K_1\rightarrow \mu^K_2}_\alpha\#\mu$ is in $K$
and the preceding strict inequality contradict the fact that $(P)$ can be achieved by two different measures $\mu^K_1$ and $\mu^K_2$.

Since $\mu$ is absolutely continuous with respect to the Lebesgue measure, choosing $X^{\mu^K}=T^{\mu\rightarrow \mu^K}(X^\mu)$ gives
$W_2$-optimal transport mapping $\mu$ to $\mu^K$ exists and we have, for any r.v. $X$ such that $\mbox{Law}(X)=\mu$,
\begin{align*}
\EE_\PP\left[\left|X^\mu-X^{\mu^K}\right|^2\right]&=W^2_2(\mu,K)\\
&=\inf\left\{\EE_\PP\left[|X^\mu-Y|^2\right]\,\mbox{such that}\,Y\,\mbox{is a }\er^d\mbox{-valued r.v. defined on }\Omega\mbox{ with}\,\mbox{Law}(Y)\in K\right\}.
\end{align*}
\end{proof}
Additionally, we have
\begin{lemma}\label{lem:Sub1} Define the functional $F_K:\Pp_2(\er^d)\rightarrow\er$ as
\begin{equation}
\label{eq:DistanceFunction}
F_K(\mu)=W^2_2(\mu,K),
\end{equation}
where $W_2(\mu,K)$ is given as in \eqref{def:DistanceK}.  Under \hypii, for any probability measure $\mu$ in $\Pp_2(\er^d)$, we have
\[
F_K(\nu)-F_K(\mu)\geq \EE_\PP\left[(X^\mu-X^{\mu^K})\cdot (X^\mu-X^\nu)\right],\,\forall\,\nu\in\Pp(\er^d).
\]
\end{lemma}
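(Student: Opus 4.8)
The idea is to lift $F_K$ to the Hilbert space $H=L^2(\Omega,\Ff,\PP;\er^d)$ and recognise it there as the squared distance to a closed convex set, for which the asserted estimate is nothing but the first-order (sub-gradient) inequality. Fix a rich enough atomless space $(\Omega,\Ff,\PP)$ and put $\mathcal{K}_\Omega:=\{Y\in H\,:\,\mbox{Law}(Y)\in K\}$. By \hypii this set is convex: for $Y_0,Y_1\in\mathcal{K}_\Omega$ and $\alpha\in[0,1]$, applying \hypii to the particular coupling $(Y_0,Y_1)$ shows $\mbox{Law}((1-\alpha)Y_0+\alpha Y_1)\in K$, i.e. $(1-\alpha)Y_0+\alpha Y_1\in\mathcal{K}_\Omega$; it is also closed, since $L^2$-convergence entails $W_2$-convergence, hence weak convergence of laws, and $K$ is weakly closed. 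Finally, for any $X^\mu\sim\mu$, the definitions \eqref{eq:W2-bis} and \eqref{def:DistanceK} give
\[
F_K(\mu)=W^2_2(\mu,K)=\inf\left\{\EE_\PP[|X^\mu-Z|^2]\,:\,Z\in\mathcal{K}_\Omega\right\}=\mbox{dist}_H(X^\mu,\mathcal{K}_\Omega)^2 ,
\]
and, by Lemma \ref{lem:ProjK}, the infimum is attained exactly at $Z=X^{\mu^K}$, so that $X^{\mu^K}$ is the metric projection of $X^\mu$ onto $\mathcal{K}_\Omega$.

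\textbf{Variational inequality for the projection.} The key step is to prove that
\[
\EE_\PP[(X^\mu-X^{\mu^K})\cdot(Z-X^{\mu^K})]\leq 0,\qquad\mbox{for every }Z\in\mathcal{K}_\Omega .
\]
For $\alpha\in[0,1]$ the point $(1-\alpha)X^{\mu^K}+\alpha Z$ belongs to $\mathcal{K}_\Omega$ by the convexity just established, so the optimality of $X^{\mu^K}$ from Lemma \ref{lem:ProjK} yields $\EE_\PP[|X^\mu-X^{\mu^K}|^2]\leq\EE_\PP[|X^\mu-X^{\mu^K}+\alpha(X^{\mu^K}-Z)|^2]$; expanding the right-hand side as $\EE_\PP[|X^\mu-X^{\mu^K}|^2]+2\alpha\,\EE_\PP[(X^\mu-X^{\mu^K})\cdot(X^{\mu^K}-Z)]+\alpha^2\EE_\PP[|X^{\mu^K}-Z|^2]$, dividing by $\alpha>0$ and letting $\alpha\to0^+$ gives the claim.

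\textbf{Conclusion.} For any $Z\in\mathcal{K}_\Omega$ write $X^\nu-Z=(X^\nu-X^\mu)+(X^\mu-X^{\mu^K})+(X^{\mu^K}-Z)$ and expand the square. The mixed term $2\,\EE_\PP[(X^\mu-X^{\mu^K})\cdot(X^{\mu^K}-Z)]$ is $\geq 0$ by the previous step, while the square of $(X^\nu-X^\mu)+(X^{\mu^K}-Z)$ is nonnegative; hence
\[
\EE_\PP[|X^\nu-Z|^2]\;\geq\;\EE_\PP[|X^\mu-X^{\mu^K}|^2]+2\,\EE_\PP[(X^\mu-X^{\mu^K})\cdot(X^\nu-X^\mu)] .
\]
Taking the infimum over $Z\in\mathcal{K}_\Omega$ on the left-hand side identifies it with $F_K(\nu)$, which produces the claimed lower bound for $F_K(\nu)-F_K(\mu)$. (Equivalently, Steps~2--3 amount to quoting that $\mbox{dist}_H(\cdot,\mathcal{K}_\Omega)^2$ is convex with gradient $2(\,\cdot-P_{\mathcal{K}_\Omega}(\cdot))$, whence the subdifferential inequality.)

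\textbf{Main obstacle.} The delicate point is the generality allowed for $\nu$: when $\nu$ is not absolutely continuous (or not even in $\Pp_2$), a genuine $K$-projection of $\nu$ need not exist or be unique, so the argument must be run through the infimum / minimizing sequences $Z_n\in\mathcal{K}_\Omega$ rather than through a projection of $X^\nu$, and one must check that $(\Omega,\Ff,\PP)$ is rich enough for the identity $W^2_2(\cdot,K)=\mbox{dist}_H(\cdot,\mathcal{K}_\Omega)^2$ to hold; once this bookkeeping is in place, the Hilbert-space computations above are routine. A secondary point is that \hypii must be invoked precisely in the form ``$\mathcal{K}_\Omega$ is convex for arbitrary couplings'', which is exactly what the quantifier over all $X^{\nu_0},X^{\nu_1}$ in \hypii provides.
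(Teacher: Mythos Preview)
Your proof is correct and actually cleaner than the paper's. Both arguments rest on the convexity of $\mathcal{K}_\Omega=\{Y:\mbox{Law}(Y)\in K\}$ furnished by \hypii and an $\alpha\to 0^+$ differentiation, and both arrive at the same conclusion
\[
F_K(\nu)-F_K(\mu)\;\ge\;2\,\EE_\PP\big[(X^\nu-X^\mu)\cdot(X^\mu-X^{\mu^K})\big],
\]
so in particular both exhibit the same factor~$2$ and sign (the statement as printed drops the~$2$ and flips a sign). The routes, however, differ. The paper works with \emph{both} projections $X^{\mu^K}$ and $X^{\nu^K}$: it interpolates $g(\alpha)=\EE\big[|(1-\alpha)(X^\mu-X^{\mu^K})+\alpha(X^\nu-X^{\nu^K})|^2\big]$, shows $g(\alpha)\le(1-\alpha)F_K(\mu)+\alpha F_K(\nu)$, and then compares the derivative of $g$ at $0$ with that of $h(\alpha)=\EE\big[|X^\mu-(1-\alpha)X^{\mu^K}-\alpha X^{\nu^K}|^2\big]$, after which the $X^{\nu^K}$ contributions cancel. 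Your argument instead lifts $F_K$ to $\mbox{dist}_H(\cdot,\mathcal{K}_\Omega)^2$, isolates the variational inequality $\EE[(X^\mu-X^{\mu^K})\cdot(Z-X^{\mu^K})]\le 0$ for the Hilbert projection (this is exactly the content the paper proves separately as Proposition~\ref{prop:Monotone2}), and then expands $|X^\nu-Z|^2$ and takes the infimum over $Z\in\mathcal{K}_\Omega$. The payoff of your route is twofold: you never invoke $X^{\nu^K}$, so the ``main obstacle'' you flag (that $\nu$ need not lie in $\Pp^{ac}_2$ and hence need not have a unique projection) simply does not arise; and the lemma is revealed as the textbook subgradient inequality for the squared distance to a closed convex set in Hilbert space, which makes Corollary~\ref{coro:Monotone1} immediate as monotonicity of the gradient.
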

from which we also deduce that
\begin{corollary}\label{coro:Monotone1} Under \hypii, for $\mu,\nu$ in $\Pp^{ac}_2$, let $X^\mu$ and $X^{\nu}$ be arbitrary representants
of $\mu$ and $\nu$ respectively, and $X^{\mu^K}$ and $X^{\nu^K}$, their projections on $K$ given as in Lemma \ref{lem:ProjK}. Then we have
\begin{equation}\label{eq:prop:Monotone1}
0\geq \EE_\PP\left[\left(X^\nu-X^\mu\right)\cdot \left(\left(X^{\nu^K}-X^\nu\right)-\left(X^{\mu^K}-X^\mu\right)\right)\right].
\end{equation}
\end{corollary}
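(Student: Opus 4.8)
The plan is to derive Corollary~\ref{coro:Monotone1} directly from Lemma~\ref{lem:Sub1}, which is the standard monotonicity-from-convexity argument for sub-differentials. First I would apply Lemma~\ref{lem:Sub1} with the base measure $\mu$ and the test measure $\nu$: since $X^{\nu}$ is a representant of $\nu$ on $(\Omega,\Ff,\PP)$, this yields
\[
F_K(\nu)-F_K(\mu)\geq \EE_\PP\left[\left(X^\mu-X^{\mu^K}\right)\cdot\left(X^\mu-X^\nu\right)\right].
\]
Then I would apply Lemma~\ref{lem:Sub1} a second time, with the roles of $\mu$ and $\nu$ interchanged, using the same representant $X^{\mu}$ of $\mu$:
\[
F_K(\mu)-F_K(\nu)\geq \EE_\PP\left[\left(X^\nu-X^{\nu^K}\right)\cdot\left(X^\nu-X^\mu\right)\right].
\]
Adding these two inequalities makes the left-hand side vanish, leaving
\[
0\geq \EE_\PP\left[\left(X^\mu-X^{\mu^K}\right)\cdot\left(X^\mu-X^\nu\right)+\left(X^\nu-X^{\nu^K}\right)\cdot\left(X^\nu-X^\mu\right)\right],
\]
and a short rearrangement of the two inner products — factoring out the common factor $(X^\nu-X^\mu)$ after a sign flip in the first term — gives exactly \eqref{eq:prop:Monotone1}.

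The one point that needs care is the consistency of the representants across the two applications of Lemma~\ref{lem:Sub1}. Lemma~\ref{lem:ProjK} guarantees that for \emph{any fixed} representant $X^\mu$ of $\mu$ there is a \emph{unique} optimal $X^{\mu^K}$, and likewise for $\nu$; so I must make sure that the $X^{\mu^K}$ appearing in both inequalities is the projection attached to the \emph{same} $X^\mu$, and similarly for $X^{\nu^K}$ and $X^\nu$. This is automatic because the statement of the corollary fixes arbitrary representants $X^\mu,X^\nu$ at the outset and then refers to ``their projections on $K$ given as in Lemma~\ref{lem:ProjK}'', so both invocations of Lemma~\ref{lem:Sub1} are made on the common probability space $(\Omega,\Ff,\PP)$ carrying this fixed pair. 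Since $\mu,\nu$ are assumed in $\Pp^{ac}_2$, Lemma~\ref{lem:ProjK} indeed applies to both, so the required projections exist and are well-defined.

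I do not expect a genuine obstacle here: the result is the routine ``the sub-differential of a convex functional is a monotone operator'' lemma, transcribed into the lifted/probabilistic formulation. The only thing to be vigilant about is bookkeeping of the signs when combining the two inequalities, and the observation that $F_K(\mu)$ and $F_K(\nu)$ are finite (which holds since $\mu,\nu\in\Pp_2$ and $K\neq\emptyset$, so $W_2^2(\mu,K),W_2^2(\nu,K)<\infty$), which is what legitimately allows cancelling them when the two inequalities are summed.
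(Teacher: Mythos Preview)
Your approach is correct and is exactly what the paper intends: the corollary is stated immediately after Lemma~\ref{lem:Sub1} with the phrase ``from which we also deduce that'', and no separate proof is given, so the paper's implicit argument is precisely the two applications of Lemma~\ref{lem:Sub1} (with the roles of $\mu$ and $\nu$ swapped) followed by summation that you describe. Your remarks on keeping the representants consistent across the two applications and on the finiteness of $F_K(\mu),F_K(\nu)$ are the right points of care.
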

\begin{proof}[Proof of Lemma \ref{lem:Sub1}] Taking $X^{\nu^K}\sim \nu^K$, owing to \hypii-(b), $\mbox{Law}((1-\alpha)X^{\mu^K}+\alpha X^{\nu^K})$ is in $K$. Therefore, for
$\left((1-\alpha)x+\alpha T^{\mu\rightarrow \nu}(x)\right)\#\mu$, we have
\begin{align*}
&\EE_\PP\left[\left|(1-\alpha)\left(X^{\mu}-X^{\mu^K}\right)+\alpha\left(X^{\nu}-X^{\nu^K}\right)\right|^2\right]\\
&= (1-\alpha)^2\EE_\PP\left[\left|X^{\mu}-X^{\mu^K}\right|^2\right]
+\alpha^2\EE_\PP\left[\left|X^{\nu}-X^{\nu^K}\right|^2\right]+2\alpha(1-\alpha)
\EE_\PP\left[\left(X^{\mu}-X^{\mu^K}\right)\cdot \left(X^{\nu}-X^{\nu^K}\right)\right]
\\
&\leq (1-\alpha)\EE_\PP\left[\left|X^{\mu}-X^{\mu^K}\right|^2\right]
+\alpha\EE_\PP\left[\left|X^{\nu}-X^{\nu^K}\right|^2\right]\leq (1-\alpha)W^2_2(\mu,K)+\alpha W^2_2(\nu,K),
\end{align*}
applying the inequality $2a\cdot b\leq |a|^2+|b|^2,\,a,b\in\er^d$ for the third inequality.
Therefore
\begin{equation*}
F_K(\nu)-F_K(\mu)\geq \lim_{\alpha\rightarrow 0}\frac{1}{\alpha}\left(\EE_\PP\left[\left|(1-\alpha)\left(X^{\mu}-X^{\mu^K}\right)+\alpha\left(X^{\nu}-X^{\nu^K}\right)\right|^2\right]- W^2_2(\mu,K)\right).
\end{equation*}
Since
\[
W^2_2(\mu,K)\leq \EE_\PP\left[\left|X^{\mu}-(1-\alpha)X^{\mu^K}-\alpha X^{\nu^K}\right|^2\right]
\]
we get
\begin{align*}
F_K(\nu)-F_K(\mu)&\geq\lim_{\alpha\rightarrow 0}\frac{1}{\alpha}\left(\EE_\PP\left[\left|(1-\alpha)\left(X^{\mu}-X^{\mu^K}\right)+\alpha\left(X^{\nu}-X^{\nu^K}\right)\right|^2\right]- W^2_2(\mu,K)\right)\\
&\geq \lim_{\alpha\rightarrow 0}\frac{1}{\alpha}\left(\EE_\PP\left[\left|(1-\alpha)\left(X^{\mu}-X^{\mu^K}\right)+\alpha\left(X^{\nu}-X^{\nu^K}\right)\right|^2\right]-
\EE_\PP\left[\left|X^{\mu}-(1-\alpha)X^{\mu^K}-\alpha X^{\nu^K}\right|^2\right]\right).
\end{align*}
Adding and subtracting $\EE_\PP\left[\left|X^\mu-X^\nu\right|^2\right]$ to the right-hand expression gives
\begin{align*}
&F_K(\nu)-F_K(\mu)\\
&\geq \lim_{\alpha\rightarrow 0}\frac{1}{\alpha}\left(\EE_\PP\left[\left|(1-\alpha)\left(X^{\mu}-X^{\mu^K}\right)+\alpha\left(X^{\nu}-X^{\nu^K}\right)\right|^2\right]-
\EE_\PP\left[\left|X^{\mu}-X^{\mu^K}\right|^2\right]\right)\\
&\quad -\lim_{\alpha\rightarrow 0}\frac{1}{\alpha}\left(\EE_\PP\left[\left|X^{\mu}-(1-\alpha)X^{\mu^K}-\alpha X^{\nu^K}\right|^2\right]-\EE_\PP\left[\left|X^{\mu}-X^{\mu^K}\right|^2\right]\right)\\
&\geq 2\EE_\PP\left[\left(\left(X^{\nu}-X^{\nu^K}\right)-\left(X^{\mu}-X^{\mu^K}\right)\right)\cdot\left(X^{\mu}-X^{\mu^K}\right)\right]-
2\EE_\PP\left[\left(X^{\mu^K}-X^{\nu^K}\right)\cdot \left(X^{\mu}-X^{\mu^K}\right)\right]\\
&\geq 2\EE_\PP\left[\left(X^{\nu}-X^{\mu}\right)\cdot\left(X^{\mu}-X^{\mu^K}\right)\right].
\end{align*}
\end{proof}

Finally, we end this part with the following proposition:
\begin{prop}\label{prop:Monotone2} Under \hypii, for all $\mu$ in $\Pp_2$, $X^\mu$ and $X^{\mu^K}$ as in Lemma \ref{lem:ProjK},
\begin{equation}\label{eq:prop:Monotone3}
0\geq -\EE_\PP\left[\left(X^\nu-X^\mu\right)\cdot \left(X^{\mu^K}-X^\mu\right)\right]
\end{equation}
for all r.v. $X^\nu:\Omega\rightarrow \er^d$ with $\mbox{Law}(X^\nu)\in K$.
\end{prop}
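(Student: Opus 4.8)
The statement \eqref{eq:prop:Monotone3} is nothing but the first-order optimality condition attached to the $L^2$-minimality property of $X^{\mu^K}$ recorded in Lemma \ref{lem:ProjK}, so the plan is to obtain it by a one-parameter perturbation, in the same spirit as the limiting argument used in the proof of Lemma \ref{lem:Sub1}. Fix the probability space $(\Omega,\Ff,\PP)$ on which $X^\mu\sim\mu$ is defined, and let $X^{\mu^K}$ be the random variable supplied by Lemma \ref{lem:ProjK}, characterised by $\mbox{Law}(X^{\mu^K})\in K$ and
\[
\EE_\PP\left[\,|X^\mu-X^{\mu^K}|^2\,\right]\leq \EE_\PP\left[\,|X^\mu-Y|^2\,\right]\quad\text{for every r.v. }Y\text{ with }\mbox{Law}(Y)\in K .
\]
(If $\mu$ is not absolutely continuous one instead observes that $C:=\{Y\in L^2(\Omega,\Ff,\PP):\mbox{Law}(Y)\in K\}$ is a closed convex subset of the Hilbert space $L^2(\Omega,\Ff,\PP)$ --- convexity being exactly \hypii applied to arbitrary representants, and closedness following from the weak-closedness of $K$ together with the fact that $L^2$-convergence forces weak convergence of the laws --- so the $L^2$-projection $X^{\mu^K}$ of $X^\mu$ onto $C$ still exists and is unique, and the displayed minimality holds.)

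Next I would exploit the convexity of $C$. Let $X^\nu:\Omega\rightarrow\er^d$ be any random variable with $\mbox{Law}(X^\nu)\in K$. Since $\mbox{Law}(X^{\mu^K})=\mu^K\in K$, assumption \hypii applied to the two representants $X^{\mu^K}$ and $X^\nu$ shows that $Y_\alpha:=(1-\alpha)X^{\mu^K}+\alpha X^\nu$ has law in $K$ for every $\alpha\in[0,1]$. Plugging $Y_\alpha$ into the minimality inequality and expanding the square,
\[
0\leq \EE_\PP\left[\,|X^\mu-Y_\alpha|^2\,\right]-\EE_\PP\left[\,|X^\mu-X^{\mu^K}|^2\,\right]=-2\alpha\,\EE_\PP\left[(X^\mu-X^{\mu^K})\cdot(X^\nu-X^{\mu^K})\right]+\alpha^2\,\EE_\PP\left[\,|X^\nu-X^{\mu^K}|^2\,\right].
\]
Dividing by $\alpha>0$ and letting $\alpha\to 0^+$ yields the key monotonicity inequality
\[
\EE_\PP\left[(X^\mu-X^{\mu^K})\cdot(X^\nu-X^{\mu^K})\right]\leq 0 .
\]

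To finish, I would simply rearrange: writing $X^\nu-X^\mu=(X^\nu-X^{\mu^K})+(X^{\mu^K}-X^\mu)$ and $X^{\mu^K}-X^\mu=-(X^\mu-X^{\mu^K})$,
\[
\EE_\PP\left[(X^\nu-X^\mu)\cdot(X^{\mu^K}-X^\mu)\right]=\EE_\PP\left[\,|X^{\mu^K}-X^\mu|^2\,\right]-\EE_\PP\left[(X^\mu-X^{\mu^K})\cdot(X^\nu-X^{\mu^K})\right]\geq \EE_\PP\left[\,|X^{\mu^K}-X^\mu|^2\,\right]\geq 0 ,
\]
which is exactly \eqref{eq:prop:Monotone3}. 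There is no real obstacle here --- it is a textbook Hilbert-space projection computation --- the only points that deserve attention being the appeal to \hypii to guarantee that the admissible set $C$ is convex (so that the segment $Y_\alpha$ stays admissible) and closed (so that the projection exists), and the mild remark that the $L^2$-projection statement of Lemma \ref{lem:ProjK} extends verbatim from $\Pp^{ac}_2$ to all of $\Pp_2$. Alternatively, when $\mu,\nu\in\Pp^{ac}_2$ one may bypass the perturbation step altogether and just specialise Corollary \ref{coro:Monotone1} to the case $\nu\in K$, where $\nu^K=\nu$ and hence $X^{\nu^K}-X^\nu=0$, giving \eqref{eq:prop:Monotone3} directly.
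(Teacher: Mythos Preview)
Your proof is correct and follows essentially the same route as the paper's: both perturb the minimiser along the segment $Y_\alpha=(1-\alpha)X^{\mu^K}+\alpha X^\nu$ (admissible by \hypii), invoke the $L^2$-minimality of $X^{\mu^K}$ from Lemma~\ref{lem:ProjK}, divide by $\alpha$ and let $\alpha\to 0$, and then rearrange algebraically. Your final rearrangement is in fact slightly sharper (it exhibits the lower bound $\EE_\PP[|X^{\mu^K}-X^\mu|^2]$), and your parenthetical remark that the $L^2$-projection argument extends from $\Pp^{ac}_2$ to all of $\Pp_2$ usefully patches the mild gap between the hypotheses of Lemma~\ref{lem:ProjK} and the statement of the proposition.
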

\begin{proof}[Proof of Proposition \ref{prop:Monotone2}]
Under \hypii, $\mbox{Law}((1-\alpha)X^K+\alpha X^\nu)$ belongs to $K$ for all $0\leq \alpha\leq 1$. Hence, according to Lemma \ref{lem:ProjK},
\begin{equation*}
\EE_\PP\left[|X^\mu-X^{\mu^K}|^2\right]\leq \EE_\PP\left[|X^\mu-X^{\mu^K}+\alpha(X^\nu-X^{\mu^K})|^2\right]
\end{equation*}
  Dividing both sides of the inequality by $\alpha$ and taking the limit $\alpha\rightarrow 0$ gives
\begin{align*}
0&\leq 2\EE_\PP\left[\left(X^\mu-X^{\mu^K}\right)\cdot \left(X^\nu-X^{\mu^K}\right)\right]\\
&\leq 2\EE_\PP\left[\left(X^\mu-X^\nu\right)\cdot \left(X^\nu-X^{\mu^K}\right)\right]+\EE_\PP\left[\left|X^\nu-X^{\mu^K}\right|^2\right]
\end{align*}
from which we deduce \eqref{eq:prop:Monotone3}.
\end{proof}

\subsection{Examples of constraint space satisfying \hypii and \hypiii}
In this subsection, we present some examples of constraint sets satisfying assumptions \hypii and \hypiii.
\paragraph{Constraints on the support of a probability measure:} In this paragraph, we consider the case of constraint on the support of probability and recover classical results (see e.g. \cite{LioSzn-84}, \cite{Slominski-13}) on penalization approximation for the Skorokhod problem. Given $\mathbf{C}$ a compact convex subset of $\er^d$, the constraint set
\[
K=\left\{\nu\in\Pp(\er^d)\,|\,\mbox{the support of }\nu\,\mbox{is in }\mathbf{C}\right\}.
\]
satisfies naturally the assumptions \hypii and \hypiii: Indeed, the support of probability measures is closed for the weak topology, any couple $\mu,\nu$ in $K$ has convex interpolation in $K$ and for any random variable $X,Y$ in $\mathbf{C}$, then, by convexity of $\mathbf{C}$, $(1-\alpha)X+\alpha Y$ is in $\mathbf{C}$. For \hypiii, taking $\mathbf{\gamma}$ a probability measure such that the distance between the support of $\mathbf{\gamma}$ and the interior of $\mathbf{C}$ is strictly less than $\kappa>0$, the set of admissible transports for $\delta_K$ are of the form $x+\kappa F(x)$ for $F\in L^\infty(\er^d;\er^d)$ such that $\Vert F\Vert_{L^\infty}\leq 1$.
\paragraph{Constrained potential energy:} Given $0<\kappa_1<\kappa_2<\infty$ and a convex function $V:\er^d\rightarrow [0,\infty)$ of class $\Cc^1$
such that there exists a point $a\in\er^d$ in which $\kappa_1<V(a)<\kappa_2$ and such that $|\nabla V(x)|\leq c(1+|x|^p)$ for $1\leq p<\infty$, the set
\[
K=\left\{\nu\in\Pp(\er^d)\,|\, \int V(z)\nu(dz)\leq \kappa_2\right\}
\]
satisfies the assumption \hypii. The assumption \hypiii is satisfied for $\gamma(dx)=\delta_{\{a\}}(dx)$, the Dirac measure in $a$, and $r$ small enough

\paragraph{Constrained potential energy:} Given $\kappa>0$, and a symmetric convex function $W:\er^d \rightarrow [0,\infty)$ with bounded derivative such that $W(0)=0$, consider the set
\[
K=\left\{\nu\in\Pp(\er^d)\,|\,\iint W(x-y)\nu(dx)\nu(dy)\leq \kappa\right\}.
\]
Since $\nu\mapsto \iint W(x-y)\nu(dx)\nu(dy)$ is displacement convex, $K$ satisfies \hypii-(b). Taking
$\gamma(dx)=\delta_{\{x_0\}}(dx)$, for $x_0\in\er^d$, and observing that
\[
\iint W(x-y)\gamma(dx)\gamma(dy)=W(a-a)=0,
\]
and that for $X^\gamma$, $\tilde{X}^\gamma$ two independent representants of $\gamma$ and $Z,\tilde{Z}:\Omega\rightarrow \er^d$ two independent r.v., bounded a.s. by $1$,
\[
\EE_\PP\left[W(X^\gamma+rZ-\tilde{X}^\gamma-r\tilde{Z})\right]=
\EE_\PP\left[ W(r(Z-\tilde{Z}))\right]\leq r\Vert \nabla W\Vert_\infty,
\]
the assumption \hypiii is fulfilled for $r$ small enough.
\section{Existence and uniqueness result for the penalized system \eqref{eq:PenalizedSDE}.}\label{sec:PenalizedWellposedness}
In this section, we demonstrate the pathwise wellposedness of \eqref{eq:PenalizedSDE} as stated Theorem \ref{thm:Main1}, assuming that \hypcoef, \hypi and \hypii hold true. The proof will be decomposed into the existence part and the uniqueness part. The pathwise wellposedness of \eqref{eq:PenalizedSDE} will then follow from Yamada-Watanabe's results (see e.g. \cite{KarShr-98}).

The existence part is mostly handled by a time-approximation of \eqref{eq:PenalizedSDE} on a partition
\[
0=t^{N}_0\leq t^N_1\leq \cdots t^N_{N-1}\leq t^N_N=T,\,\sup_{0\leq i\leq N}|t^N_i-t^N_{i-1}|\leq 1/N,
\]
where the component $(L^\epsilon_t;\,0\leq t\leq T)$ is "frozen" in each interval $[t^N_i,t^N_{i+1})$. Due to assumptions \hypcoef, this approximation is well-posed and, using formulations in terms of a martingale problem, it can checked its convergence towards a weak solution to \eqref{eq:PenalizedSDE}.
The uniqueness part relies on the monotone property related to $\mu\mapsto 2W^2_2(\mu,K)/\epsilon$.

\subsection{Existence part} Given $N$ a positive integer, let us define the time-step $h:=T/N$ and the decomposition $\{t_{n}:=nh\}_{0\leq n\leq N}$ of $[0,T]$. Let us also construct, on some probability space $(\Omega,\Ff,\PP)$ under which are defined $X^\epsilon_0\sim\mu_0$ and an independent standard $\er^d$-Brownian motion $(W_t;\,t\geq 0)$, the process $(X^{N,\epsilon}_t;\,t\in[0,T])$ as follows:

$\circ$ for $0< t\leq t_1$
\[
X^{N,\epsilon}_t=X^\epsilon_0+\int_0^{t}b(s,X^{N,\epsilon}_s) \,ds+\int_{0}^t \sigma(s,X^{N,\epsilon}_s)\,dW_s,
\]
$\circ$ for $t_n< t\leq t_{n+1}$, given $X^{N,\epsilon}_{t_n}$ and its distribution $\mu^{N,\epsilon}(t_n)$,
\[
X^{N,\epsilon}_t=X^{N,\epsilon}_{t_{n}}+\int_{t_{n}}^t b(s,X^{N,\epsilon}_s) \,ds+\int_{t_{n}}^t \sigma(s,X^{N,\epsilon}_s)\,dW_s+L^{N,\epsilon}_t,
\]
for
\[
L^{N,\epsilon}_t=\int_{t_{n}}^t \frac{T^{\mu^{\epsilon}(t_n)\rightarrow \mu^K_\epsilon(t_n)}(X^{N,\epsilon}_{t_n})-X^{N,\epsilon}_{t_n}}{\epsilon}\,ds.
\]
The assumptions \hypcoefi and \hypcoefii and [\cite{Friedman-06}, Theorem 5.4, Chapter $5$] ensure that, for all $n\geq 1$, the distribution of $X_{t_n}$ is in $\Pp^{ac}_2(\er^d)$ so that the optimal transport $T^{\mu^{\epsilon}(t_n)\rightarrow \mu^K_\epsilon(t_n)}$ is  well defined. Additionally, \hypi ensures that we have
\begin{equation}\label{ProofStep1a}
\begin{aligned}
&\EE_\PP\left[\int_0^T\left|\frac{T^{\mu^{\epsilon}(t_n)\rightarrow \mu^K_\epsilon(t_n)}(X^{N,\epsilon}_{t_n})-X^{N,\epsilon}_{t_n}}{\epsilon}\right|^2\,ds\right]\\
&\leq \frac{T}{\epsilon^2 N}\sum_{n=0}^NW^2_2(\mu^{N,\epsilon}(t_n),K)\leq \frac{T}{\epsilon^2}\sup_{t\in[0,T]}W^2_2(\mu^{\epsilon}(t),K)<\infty.
\end{aligned}
\end{equation}
 Furthermore, defining
\[
\eta_N:t\in[0,T]\mapsto \eta_N(t):=h\lfloor t/h\rfloor,
\]
the assumptions \hypi and \hypcoefi yield that, for all $0\leq T_0\leq T$
\begin{align*}
&\EE_\PP\left[\max_{t\in[0,T_0]}|X^{N,\epsilon}_t|^2\right]=\EE_\PP\left[\max_{t\in[0,T_0]}\left|X^\epsilon_0+L^{N,\epsilon}_{\eta_N(s)}+\int_0^tb(s,X^{N,\epsilon}_s)\,ds+\int_0^t\sigma(s,X^{N,\epsilon}_s)\,dW_s\right|^2\right]\\
&\leq 8\left(\EE_\PP\left[|X^\epsilon_0|^2\right]+\int_0^{T_0}\EE_\PP\left[\left|\frac{T^{\mu^{N,\epsilon}(\eta_N(t))\rightarrow P_K(\mu^{N,\epsilon}(\eta_N(t)))}(X^{N,\epsilon}_{\eta_N(t)})-X^{N,\epsilon}_{\eta_N(t)}}{\epsilon}\right|^2\right]\,dt\right)\\
&\quad +8\EE_\PP\left[\max_{t\in[0,T_0]}|\int_0^tb(s,X^{N,\epsilon}_s)\,ds+\int_0^t\sigma(s,X^{N,\epsilon}_s)\,dW_s|^2\right].
%
\end{align*}
Defining the positive finite constant $K$ such that
\[
|b(t,x)|^2+|\sigma\sigma^*(t,x)|\leq K(1+|x|^2),\,\forall\,t\in[0,T],
\]
and since
\begin{align*}
\EE_\PP\left[\left|\frac{T^{\mu_{N,\epsilon}(\eta_N(t))\rightarrow \mu^K_{N,\epsilon}(\eta_N(t))}(X^{N,\epsilon}_{\eta_N(t)})-X^{N,\epsilon}_{\eta_N(t)}}{\epsilon}\right|^2\right]=\frac{1}{\epsilon^2}W^2_2(\mu_{N,\epsilon}(\eta_N(t)),K),
\end{align*}
we deduce that
\begin{align*}
&\EE_\PP\left[\max_{t\in[0,T_0]}|X^{N,\epsilon}_t|^2\right]\\
&\leq 8\left(K\vee 1\EE_\PP\left[1+|X^\epsilon_0|^2\right]+\frac{1}{\epsilon^2}\int_0^{T_0}W_2^2(\mu^{N,\epsilon}(\eta_N(t)),K)\,dt +K\int_0^{T_0}\EE_\PP\left[\max_{r\in[0,t]}|X^{N,\epsilon}_r|^2\right]\,dt\right),
\end{align*}
and by Gronwall's lemma, that
\begin{equation}\label{ProofStep1b}
\EE_\PP\left[\max_{t\in[0,T_0]}|X^{N,\epsilon}_t|^2\right]
\leq c\left(\EE_\PP\left[1+|X^\epsilon_0|^2\right]+\int_0^{T_0}W_2^2(\mu^{N,\epsilon}(\eta_N(t)),K)\,dt\right),
\end{equation}
for $c$ a positive finite constant depending only on $T$ and $K$. Using \hypi, we can observe that
\[
W^2_2(\mu_{N,\epsilon}(\eta_N(t)),K)\leq \EE_\PP\left[|X^{N,\epsilon}_{\eta_N(t)}-X^{\epsilon}_0|^2\right],
\]
and that
\begin{align*}
&\int_0^{T_0}\EE_\PP\left[|X^{N,\epsilon}_{\eta_N(t)}-X^{\epsilon}_0|^2\right]\,dt&\\
&=\int_0^{T_0}\EE_\PP\left[\left|\int_0^{\eta_N(t)}\frac{T^{\mu^{N,\epsilon}(\eta_N(s))\rightarrow P_K(\mu^{N,\epsilon}(\eta_N(s)))}(X^{N,\epsilon}_{\eta_N(s)})-X^{N,\epsilon}_{\eta_N(s)}}{\epsilon}\,ds\right.\right.\\
&\quad \left.\left.+\int_0^{\eta_N(t)}b(s,X^{N,\epsilon}_s)\,ds
+\int_0^{\eta_N(t)}\sigma(s,X^{N,\epsilon}_s)\,dW_s\right|^2\right]\,dt&\\
&\leq 8\int_0^{T_0}\left(\int_0^{\eta_N(t)}\frac{1}{\epsilon^2}W^2_2(\mu^{N,\epsilon}(\eta_N(s)),K)\,ds+\int_0^{\eta_N(t)}\EE_\PP\left[|b(s,X^{N,\epsilon}_s)|^2\right]\,ds
+\int_0^{\eta_N(t)}\EE_\PP\left[\sigma\sigma^*(s,X^{N,\epsilon}_s)\right]\,ds\right)\,dt&\\
&\leq 8\int_0^{T_0}\left(\int_0^{t}\frac{1}{\epsilon^2}\EE_\PP\left[|X^{N,\epsilon}_{\eta_N(s)}-X^\epsilon_0|^2\right]\,ds+\int_0^{t}\EE_\PP\left[|b(s,X^{N,\epsilon}_s)|^2\right]\,ds
+\int_0^{t}\EE_\PP\left[\sigma\sigma^*(s,X^{N,\epsilon}_s)\right]\,ds\right)dt.&
\end{align*}
Since $(A_1)$ ensures that
\begin{align*}
&\EE_\PP\left[|b(s,X^{N,\epsilon}_s)|^2\right]+\EE_\PP\left[\sigma\sigma^*(s,X^{N,\epsilon}_s)\right]\\
&\leq 2\EE_\PP\left[|b(s,X^{N,\epsilon}_s)-b(s,X^{\epsilon}_0)|^2\right]+\EE_\PP\left[\sigma\sigma^*(s,X^{N,\epsilon}_s)-\sigma\sigma^*(s,X^{\epsilon}_0)\right]
 +2\EE_\PP\left[|b(s,X^{N,\epsilon}_0)|^2\right]+\EE_\PP\left[\sigma\sigma^*(s,X^{N,\epsilon}_0)\right]\\
&\leq 2 \max_{t\in[0,T]}\left(\Vert b(t,.)\Vert^2_{Lip}+\Vert \sigma\sigma^*(t,.)\Vert_{Lip}\right)\EE_\PP\left[|X^{N,\epsilon}_s-X^{\epsilon}_0|^2\right] +2 Kt\EE_\PP\left[1+|X^{\epsilon}_0|^2\right]
\end{align*}
where $\Vert b(t,.)\Vert_{Lip}$ and $\Vert \sigma\sigma^*(t,.)\Vert_{Lip}$ are the Lipschitz norm of $x\mapsto b(t,x)$ and $x\mapsto \sigma\sigma^*(t,x)$ respectively.
It follows that
\[
\int_0^{T_0}\EE_\PP\left[|X^{N,\epsilon}_{\eta_N(t)}-X^{\epsilon}_0|^2\right]\,dt\leq c\int_0^{T_0} \int_0^t\EE_\PP\left[|X^{N,\epsilon}_{\eta_N(s)}-X^{\epsilon}_0|^2\right]\,ds+KT^2_0\EE_\PP\left[1+|X^{\epsilon}_0|^2\right].
\]
where $c$ is a constant depending only on $\epsilon$,$\Vert b(t,.)\Vert_{Lip}$, $\Vert \sigma\sigma^*(t,.)\Vert_{Lip}$ and $T$.
By Gronwall's lemma, we deduce that
\begin{equation}\label{ProofStep1c}
\int_0^TW^2_2(\mu^{N,\epsilon}(\eta_N(t)),K)\,dt\leq \int_0^{T}\EE_\PP\left[|X^{N,\epsilon}_{\eta_N(t)}-X^{\epsilon}_0|^2\right]\,dt\leq
(1+cTe^{cT})KT^2\EE_\PP\left[1+|X^{\epsilon}_0|^2\right],
\end{equation}
and, coming back to \eqref{ProofStep1b}, that
\begin{equation}\label{ProofStep1d}
\EE_\PP\left[\max_{t\in[0,T]}|X^{N,\epsilon}_t|^2\right]<\infty,\,\forall\,N.
\end{equation}
Additionally,  \eqref{ProofStep1c} ensures that, for all $t\in[0,T]$, $\delta>0$ such that $0\leq t+\delta \leq T$,
\begin{equation}
\label{ProofStep1e}
\begin{aligned}
&\EE_{\PP}\left[|X^{N,\epsilon}_{t+\delta}-X^{N,\epsilon}_{t}|^2\right]\\
&\leq 8\left(\int_{t}^{t+\delta}\EE_{\PP}\left[\left|\frac{T^{\mu^{\epsilon}(\eta_N(s))\rightarrow \mu^K_\epsilon(t_n)}(X^{N,\epsilon}_{t_n})-X^{N,\epsilon}_{\eta_N(s)}}{\epsilon}\right|^2+\left|b(s,X^{N,\epsilon}_s)\right|^2+
\mbox{Trace}\left(\sigma\sigma^*\right)(s,X^{N,\epsilon}_s)\right]\,ds\right)\\
&\leq c\delta.
\end{aligned}
\end{equation}
Let us now consider the process $(Y^{N,\epsilon}_t;\,0\leq t\leq T)$ given by
\[
Y^{N,\epsilon}_t=\argmin_{\mbox{Law}(Y)\in K}\EE_\PP\left[\left|X^{N,\epsilon}_t-Y\right|^2\right].
\]
Observing that
\begin{align*}
\EE_\PP\left[\left|Y^{N,\epsilon}_{t+h}-Y^{N,\epsilon}_t\right|^2\right]&\leq 4\EE_{\PP}\left[\left|Y^{N,\epsilon}_{t+h}-X^{N,\epsilon}_t\right|^2\right] +4\EE_{\PP}\left[\left|Y^{N,\epsilon}_{t+h}-X^{N,\epsilon}_{t+h}\right|^2\right]+4\EE_{\PP}\left[\left|X^{N,\epsilon}_{t+h}-X^{N,\epsilon}_{t}\right|^2\right]\\
&\leq 4W^2_2(\mu_{N,\epsilon}(t),K)+4W^2_2(\mu_{N,\epsilon}(t+h),K)+4h,
\end{align*}
using \eqref{ProofStep1e}. Next, since
\begin{align*}
W_2(\mu_{N,\epsilon}(t+h),K)&\leq  W_2(\mu_{N,\epsilon}(t+h),\mu^K_{N,\epsilon}(t))\\
&\leq W_2(\mu_{N,\epsilon}(t+h),\mu^K_{N,\epsilon}(t))-W_2(\mu_{N,\epsilon}(t),\mu^K_{N,\epsilon}(t))+W_2(\mu_{N,\epsilon}(t),\mu^K_{N,\epsilon}(t))
\end{align*}
we have
\begin{align*}
W_2(\mu_{N,\epsilon}(t+h),K)-W_2(\mu_{N,\epsilon}(t),K)&\leq W_2(\mu_{N,\epsilon}(t+h),\mu^K_{N,\epsilon}(t))-W_2(\mu_{N,\epsilon}(t),\mu^K_{N,\epsilon}(t))\\
&\leq W_2(\mu_{N,\epsilon}(t+h),\mu_{N,\epsilon}(t))\leq c\sqrt{h},
\end{align*}
using the triangular inequality of $W_2$ and \eqref{ProofStep1e} for the last inequality. In the same way, we get
\begin{equation}\label{ProofStep1e''}
\begin{aligned}
&W_2(\mu_{N,\epsilon}(t),K)-W_2(\mu_{N,\epsilon}(t+h),K)\\
&\leq W_2(\mu_{N,\epsilon}(t),\mu^K_{N,\epsilon}(t+h))-W_2(\mu_{N,\epsilon}(t+h),\mu^K_{N,\epsilon}(t+h))\leq c\sqrt{h},
\end{aligned}
\end{equation}
from which we deduce that $t\mapsto W_2(\mu_{N,\epsilon}(t),K)$ is $1/2$-H\"older continuous so that
 \begin{equation}
\EE_\PP\left[\left|Y^{N,\epsilon}_{t+h}-Y^{N,\epsilon}_t\right|^2\right]\leq C h.\label{ProofStep1e'}
\end{equation}
Applying [Karatzas and Shreve \cite{KarShr-98}, Theorem 2.8, Chapter $2$],
we can construct a locally H\"older-continuous modification of $(Y^{N,\epsilon}_t;\,0\leq t\leq T)$, that we still denote $(Y^{N,\epsilon}_t;\,0\leq t\leq T)$ for simplicity.
Combining \eqref{ProofStep1d}, \eqref{ProofStep1e} and \eqref{ProofStep1e'} and since, by triangular inequality, \eqref{ProofStep1d} ensures that
\[
\sup_{N\in\NN}\max_{t\in[0,T]}\EE_\PP\left[|Y^{N,\epsilon}_t|^2\right]<\infty,
\]
the sequence
$\PP^{\epsilon,N}:=\PP\circ(X^{N,\epsilon}_t,W_t,L^{N,\epsilon}_t,Y^{N,\epsilon}_t;\,t\in[0,T])^{-1}$
is relatively compact on $\Pp(\Cc([0,T];\er^{4d}))$ (see e.g. \cite{KarShr-98}, pages 63-64).  Applying Skorohod's representation theorem, there exists a filtered probability space $\left(\tilde{\Omega},\tilde{\Ff},\tilde{\PP}\right)$ under which are defined $(X^{N_k,\epsilon}_t,W^{N_k,\epsilon}_t,L^{N_k,\epsilon}_t,Y^{N,\epsilon}_t;\,0\leq t\leq T)$ such that $\tilde{\PP}\circ \left((X^{N_k,\epsilon}_t,W^{N_k,\epsilon}_t,L^{N_k,\epsilon}_t,Y^{N,\epsilon}_t;\,0\leq t\leq T\right)^{-1}=\PP^{\epsilon,N_k}$, $(X^{\epsilon}_t,L^{\epsilon}_t,W^{\epsilon}_t;\,0\leq t\leq T)$ such that $\tilde{\PP}\circ \left((X^{\epsilon}_t,W^{\epsilon}_t,L^{\epsilon}_t,Y^{\epsilon}_t\right)^{-1}=\PP^{\epsilon,\infty}$,
for which $\tilde{\PP}$-a.s.
\[
\lim_{k\rightarrow \infty}\left((X^{N_k,\epsilon},L^{N_k,\epsilon},W^{N_k,\epsilon},Y^{N,\epsilon}\right)=\left(X^{\epsilon},L^{\epsilon},W^{\epsilon},Y^{\epsilon}\right),\,\mbox{on}\,[0,T].
\]
By continuity of $b$ and $\sigma$,
\[
X^\epsilon_t=X^\epsilon_0+\int_0^t b(s,X^\epsilon_s)\,ds+\int_0^t b(s,X^\epsilon_s)\,dW^\epsilon_s+L^\epsilon_t,
\]
where $X^\epsilon_0\sim \mu_0$, $(W^\epsilon_t;\,0\leq t\leq T)$ is a $\er^d$-Brownian motion and since
\begin{equation}
\label{ProofStep1f}
\liminf_{k\rightarrow\infty}\EE_{\tilde{\PP}}\left[\max_{t\in[0,T]}\left|L^{N_k,\epsilon}_t-\int_0^tY^{N_k,\epsilon}_s\,ds\right|\right]=0,
\end{equation}
we have
\[
L^\epsilon_t=\int_0^t\frac{Y^{\epsilon}_s-X^\epsilon_s}{\epsilon}\,ds
\]
Thus it remains to check that $\tilde{\PP}$-a.s., for all $t\in [0,T]$,
  \begin{equation}
  L^\epsilon_t=\int_0^t \frac{T^{\mu_\epsilon(s)\rightarrow\mu^K_\epsilon(s)}-X^\epsilon_s}{\epsilon}\,ds.\label{ProofStep1e'''}
  \end{equation}
Replicating the proof of \eqref{ProofStep1e''}, we observe that
\[
\left|W_2(\mu_{N_k,\epsilon}(t),K)-W_2(\mu_{\epsilon}(t),K)\right|\leq W_2(\mu_{N_k,\epsilon}(t),\mu_{\epsilon}(t))\leq \EE_{\tilde{\PP}}\left[\left|X^{N_k,\epsilon}_t-X^{\epsilon}_t\right|^2\right]
\]
by Lebesgue's dominated convergence theorem. It then follows that
\[
W_2(\mu_{\epsilon}(t),K)=\EE_{\tilde{\PP}}\left[\left|Y^{\epsilon}_t-X^{\epsilon}_t\right|^2\right]
\]
and by uniqueness of the $W_2$-minimal projection of $\mu^{\epsilon}(t)$ we deduce \eqref{ProofStep1e'''}.
To complete the existence  part, it remains to check that, for all $t$, $\mu^{\epsilon}(t)$ admits a Lebesgue density so that $Y^{\epsilon}_t=T^{\mu_{\epsilon}(t)\rightarrow \mu^K_{\epsilon}(t)}$. To this aim, consider $\RR$ the probability measure of
\[
R_t=X_0+\int_0^tb(s,R_s)\,ds+\int_0^t\sigma(s,R_s)\,dW_s,\,0\leq t\leq T,
\]
and let $\PP_{X^{N,\epsilon}}$ be the law of $(X^{N,\epsilon}_t;\,0\leq t\leq T)$ and $\PP_{X^{\epsilon}}$ be the law of $(X^{\epsilon}_t;\,0\leq t\leq T)$.
Then, by the lower semi-continuity of the relative entropy and \eqref{ProofStep1a},
\begin{align*}
0\leq \EE_{\PP_{X^{\epsilon}}}\left[\log(d\PP_{X^{\epsilon}}/d\RR)\right]&\leq \liminf_{k\rightarrow \infty}\EE_{\PP_{X^{N_k,\epsilon}}}\left[\log(d\PP_{X^{N_k,\epsilon}}/d\RR)\right]\\
&\leq \liminf_{k\rightarrow \infty}\EE_{\PP}\left[\int_0^T\left|\frac{T^{\mu^{\epsilon}(t_n)\rightarrow \mu^K_\epsilon(t_n)}(X^{N,\epsilon}_{t_n})-X^{N,\epsilon}_{t_n}}{\epsilon}\right|^2\,ds\right]
<\infty,
\end{align*}
by \eqref{ProofStep1a}. Therefore $\PP_{X^{\epsilon}}$ is absolutely continuous with  respect to $\RR$ and, in particular,
Hence, for all $t$, $\mu^{\epsilon}(t)$ is absolutely continuous with respect to the Lebesgue measure.
\subsection{Uniqueness part}

Thanks to these preliminaries, we are now in position to prove the strong uniqueness of a solution to \eqref{eq:PenalizedSDE} with the following proposition:

\begin{prop}\label{prop:Uniq} Given $\epsilon>0$, let $(X^1_t, L^1_t;\;0\leq t\leq T)$ and $(X^2_t, L^2_t;\;0\leq t\leq T)$ be two solutions of \eqref{eq:PenalizedSDE}, defined on the same probability space $(\Omega,\Ff,\PP)$, both starting at the same position $X_0$, both driven by the same brownian motion $(W_t;\;0\leq t\leq T)$ and such that their time-marginal distributions are in $\Pp^{ac}_2(\er^d)$ at all time $t$ in $[0,T]$. Then, $\PP$-almost surely, $(X^1_t,L^1_t)=(X^2_t,L^2_t)$, for all $t$ in $[0,T]$.
\end{prop}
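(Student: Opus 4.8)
The plan is the standard Grönwall argument on $t\mapsto\EE_\PP\!\left[|X^1_t - X^2_t|^2\right]$, in which the only non-routine ingredient is the control of the bounded-variation terms through the monotonicity established in Corollary~\ref{coro:Monotone1}. First I would subtract the two instances of \eqref{eq:PenalizedSDEa},
\[
X^1_t - X^2_t = \int_0^t\!\big(b(s,X^1_s)-b(s,X^2_s)\big)\,ds + \int_0^t\!\big(\sigma(s,X^1_s)-\sigma(s,X^2_s)\big)\,dW_s + \big(L^1_t - L^2_t\big),
\]
and apply Itô's formula to $|X^1_t-X^2_t|^2$. To keep every term integrable I would localise by $\tau_n:=\inf\{t\ge 0:\ |X^1_t|\vee|X^2_t|\ge n\}$, so that the stochastic-integral term is a true martingale of zero mean, carry out the estimate up to $t\wedge\tau_n$, and pass to the limit $n\to\infty$ using the second-moment bounds of the existence part. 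After taking expectations, three contributions remain: the drift-difference term, the trace term coming from the quadratic variation of the Brownian part, and $2\,\EE_\PP\!\int_0^{t\wedge\tau_n}(X^1_s-X^2_s)\cdot d(L^1_s-L^2_s)$.

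The first two are bounded by $C\,\EE_\PP\!\int_0^{t}|X^1_{s\wedge\tau_n}-X^2_{s\wedge\tau_n}|^2\,ds$ with $C$ depending only on $\|b\|_{Lip}$ and $\|\sigma\|_{Lip}$ (assumption \hypcoefi), using $2a\cdot b\le|a|^2+|b|^2$. For the third, I observe that by \eqref{eq:PenalizedSDEb} its integrand equals $\tfrac{2}{\epsilon}(X^1_s-X^2_s)\cdot\big((T^{\mu^1(s)\to(\mu^1)^K(s)}(X^1_s)-X^1_s)-(T^{\mu^2(s)\to(\mu^2)^K(s)}(X^2_s)-X^2_s)\big)$; since the marginals $\mu^i(s)=\mathrm{Law}(X^i_s)$ are assumed absolutely continuous, $T^{\mu^i(s)\to(\mu^i)^K(s)}(X^i_s)$ is exactly the projection $X^{\mu^K}$ coupled to $X^i_s$ in Lemma~\ref{lem:ProjK} (with $\mu=\mu^i(s)$), and Corollary~\ref{coro:Monotone1} applied for a.e.\ $s$ with $\mu=\mu^1(s)$, $\nu=\mu^2(s)$ shows this integrand has non-positive expectation. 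Discarding it gives $\EE_\PP[|X^1_{t\wedge\tau_n}-X^2_{t\wedge\tau_n}|^2]\le C\int_0^t\EE_\PP[|X^1_{s\wedge\tau_n}-X^2_{s\wedge\tau_n}|^2]\,ds$, so Grönwall's lemma forces it to vanish; letting $n\to\infty$ yields $\EE_\PP[|X^1_t-X^2_t|^2]=0$ for every $t\in[0,T]$, and path-continuity upgrades this to $X^1_t=X^2_t$ for all $t$, $\PP$-a.s. Subtracting the equations once more then gives $L^1_t=L^2_t$ for all $t$, $\PP$-a.s., which is the claim; combined with the existence part this yields, via Yamada--Watanabe, the pathwise well-posedness asserted in Theorem~\ref{thm:Main1}.

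The step I expect to require the most care is the identification of the driving field of $L^i$ with the $W_2$-optimal projection coupling of Lemma~\ref{lem:ProjK}, together with the measurability in $s$ needed to integrate the pointwise monotonicity inequality and the localisation/uniform-integrability bookkeeping justifying the exchange of limit and expectation; the absolute continuity of $\mu^i(s)$ imposed in the statement is precisely what makes all of this available. The remainder is the routine Lipschitz-plus-Grönwall scheme, with Corollary~\ref{coro:Monotone1} playing here the role that monotonicity of the subdifferential (equivalently, of the normal cone) plays in uniqueness proofs for classical penalised reflected SDEs, so that $\mu\mapsto\tfrac{1}{2\epsilon}\bm{\partial}W_2^2(\mu,K)$ behaves as a one-sided monotone perturbation of the Lipschitz coefficients $b$ and $\sigma$.
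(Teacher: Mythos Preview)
Your proposal is correct and follows essentially the same route as the paper: It\^o's formula on $|X^1_t-X^2_t|^2$, the Lipschitz bound on the $b$ and $\sigma$ contributions, Corollary~\ref{coro:Monotone1} to discard the $(L^1-L^2)$ cross-term, then Gr\"onwall. Your version is in fact slightly more careful---you localise by stopping times and spell out the passage from $X^1\equiv X^2$ to $L^1\equiv L^2$---while the paper omits the localisation and instead, after obtaining $\mu_1(t)=\mu_2(t)$, argues that the transport maps coincide and reduces to classical Lipschitz SDE uniqueness; both closures are valid.
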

\begin{proof}[Proof of Proposition \ref{prop:Uniq}] Owing to assumption \hypcoef, by It\^o's formula, we have, for all $t$
\begin{align*}
\EE\left[\left|X^1_t-X^2_t\right|^2\right]&= 2\EE\left[\int_0^t\left(X^s_t-X^2_s\right)\cdot\left(dL^1_s -dL^2_s\right)\right]\\
&\quad +2\int_0^t\EE\left[\left(b(s,X^1_s)-b(s,X^2_s)\right)\cdot \left(X^s_t-X^2_s\right)\right]\,ds+\sum_{i=1}^d\int_0^t\EE\left[a^{i,i}(s,X^1_s)-a^{i,i}(s,X^2_s)\right]\,ds\\
&\leq 2\EE\left[\int_0^t\left(X^s_t-X^2_s\right)\cdot\left(dL^1_s -dL^2_s\right)\right]+C\int_0^t\EE\left[\left|X^1_s-X^2_s\right|^2\right]\,ds,
\end{align*}
where $C=\Vert b\Vert_{Lip}+\Vert a\Vert_{Lip}$. Since
\begin{align*}
&\EE\left[\int_0^t\left(X^1_t-X^2_s\right)\cdot\left(dL^1_s -dL^2_s\right)\right]\\
&=\frac{1}{\epsilon}\int_0^t\EE\left[\left(X^1_t-X^2_s\right)\cdot\left(\left(T^{\mu_1(s)\rightarrow\mu^K_1(s)}_s-X^1_s\right) -\left(T^{\mu_2(s)\rightarrow\mu^K_2(s)}_s-X^2_s\right)\right)\right]\,ds
\end{align*}
which is nonnegative according to Corollary \ref{coro:Monotone1}. Therefore,
\[
\EE\left[\left|X^1_t-X^2_t\right|^2\right]\leq C\int_0^t\EE\left[\left|X^1_s-X^2_s\right|^2\right]\,ds,
\]
and, by Gronwall's lemma, it follows that $\sup_{t\in[0,T]}\EE\left[\left|X^1_t-X^2_t\right|^2\right]=0$. This immediately implies that, for all $t$ in $[0,T]$,
$\mu_1(t)=\mu_2(t)$, so that $\mu^K_1(t)=\mu^K_2(t)$ and $T^{\mu_1(t)\rightarrow\mu^K_1(t)}_t=T^{\mu_2(t)\rightarrow\mu^K_2(t)}_t$. Therefore, $\PP$-a.s.,
\[
X^1_t-X^2_t=\int_0^t\left(b(s,X^1_s)-b(s,X^2_s)\right)\,ds+\int_0^t\left(\sigma(s,X^1_s)-\sigma(s,X^2_s)\right)\,dW_s,\,\forall\,t\in[0,T],
\]
which, owing to the smoothness of $b$ and $\sigma$, implies that $\max_{t\in[0,T]}\left|X^1_t-X^2_t\right|=0$, $\PP$-a.s. .
\end{proof}
\section{Asymptotic behavior of the penalized system \eqref{eq:PenalizedSDE}}\label{sec:Limit}
In this section, we investigate the limit of the solution to \eqref{eq:PenalizedSDE} as $\epsilon$ tends to $0$. The next subsection is dedicated to the behavior of the time-marginal distributions $(\mu^K_\epsilon(t);\,0\leq t\leq T)$ through the estimate of $W_2(\mu_\epsilon(t),K)$ (see Lemma \ref{lem:ConvMargin} below).
Next, we consider some tightness property related to $(X^\epsilon_t,\,L^\epsilon_t;\,0\leq t\leq T)$
 \subsection{Convergence of the marginal distribution towards the constraint space}
\begin{lemma}\label{lem:ConvMargin} There exists $0<C<\infty$ depending only on $T$, $\max_{1\leq i\leq d}\Vert b^i\Vert_{\infty}$, $\max_{1\leq i\leq d}\Vert a^{i,i}\Vert_\infty$ such that
\[
W^2_2(\mu_\epsilon(t),K)+\frac{1}{\epsilon}\int_0^tW^2_2(\mu_\epsilon(s),K)\,ds\leq C.
\]
\end{lemma}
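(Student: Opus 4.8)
The plan is to set $g(t):=W^2_2(\mu_\epsilon(t),K)$ and, for $t>0$, $Y^\epsilon_t:=T^{\mu_\epsilon(t)\rightarrow\mu^K_\epsilon(t)}(X^\epsilon_t)$ (with $Y^\epsilon_0:=X_0$), which makes sense because $\mu_\epsilon(t)\in\Pp^{ac}_2(\er^d)$ for $t>0$ by the entropy argument of Section~\ref{sec:PenalizedWellposedness}. By Lemma~\ref{lem:ProjK} one then has $g(t)=\EE_\PP[|X^\epsilon_t-Y^\epsilon_t|^2]$, and since $\mu_0\in K$ by \hypi, $g(0)=0$. It suffices to treat $\epsilon\in(0,1]$ (the regime of interest; for $\epsilon\ge1$ the crude bound $W_2(\mu_\epsilon(t),K)\le\|X^\epsilon_t-X_0\|_{L^2(\PP)}$ together with Gronwall already gives the claim with a constant depending only on the stated data). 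I would then establish
\[
g(t)\le-\frac1\epsilon\int_0^t g(s)\,ds+C_1\,t,\qquad C_1:=d\Big(\max_{i}\|b^i\|_\infty^2+\max_{i}\|a^{i,i}\|_\infty\Big),
\]
from which the lemma follows at once with $C=C_1T$, using $g\ge0$.

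The proof of this inequality has two steps. First, a \emph{frozen target} It\^o estimate: for $0\le t<t+h\le T$, since $\mathrm{Law}(Y^\epsilon_t)\in K$ we have $g(t+h)\le\EE_\PP[|X^\epsilon_{t+h}-Y^\epsilon_t|^2]$, and applying It\^o's formula to $r\mapsto|X^\epsilon_r-Y^\epsilon_t|^2$ on $[t,t+h]$ — $Y^\epsilon_t$ being a fixed random variable, and the stochastic integral having zero expectation by \hypcoefi and the $L^2$-bounds on $X^\epsilon$ from Section~\ref{sec:PenalizedWellposedness} — together with $dL^\epsilon_r=\epsilon^{-1}(Y^\epsilon_r-X^\epsilon_r)\,dr$ (see \eqref{eq:PenalizedSDEb}) yields
\[
g(t+h)-g(t)\le\int_t^{t+h}\EE_\PP\Big[2(X^\epsilon_r-Y^\epsilon_t)\cdot b(r,X^\epsilon_r)+\tfrac2\epsilon(X^\epsilon_r-Y^\epsilon_t)\cdot(Y^\epsilon_r-X^\epsilon_r)+\textstyle\sum_i a^{i,i}(r,X^\epsilon_r)\Big]\,dr.
\]
Second, I would telescope this along a partition $0=s_0<\cdots<s_n=t$ and let the mesh go to $0$: since $r\mapsto Y^\epsilon_r$ is $\tfrac12$-H\"older in $L^2(\Omega;\er^d)$ — indeed, by \hypii the set $\{Z\in L^2(\Omega;\er^d):\mathrm{Law}(Z)\in K\}$ is closed and convex and, by Lemma~\ref{lem:ProjK}, $Y^\epsilon_r$ is its $L^2$-orthogonal projection of $X^\epsilon_r$, so $\|Y^\epsilon_r-Y^\epsilon_s\|_{L^2}\le\|X^\epsilon_r-X^\epsilon_s\|_{L^2}$, and $X^\epsilon$ is $\tfrac12$-H\"older in $L^2$ (Section~\ref{sec:PenalizedWellposedness}) — the frozen targets $Y^\epsilon_{s_i}$ may be replaced by $Y^\epsilon_r$ in the limit. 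Using $\EE_\PP[(X^\epsilon_r-Y^\epsilon_r)\cdot(Y^\epsilon_r-X^\epsilon_r)]=-g(r)$ and $g(0)=0$, this gives $g(t)\le\int_0^t\big(2\EE_\PP[(X^\epsilon_r-Y^\epsilon_r)\cdot b(r,X^\epsilon_r)]-\tfrac2\epsilon g(r)+\EE_\PP[\sum_i a^{i,i}(r,X^\epsilon_r)]\big)\,dr$; bounding $2\EE_\PP[(X^\epsilon_r-Y^\epsilon_r)\cdot b]\le\tfrac1\epsilon g(r)+\epsilon\|b\|_\infty^2$ by Young's inequality and $\EE_\PP[\sum_i a^{i,i}]\le d\max_i\|a^{i,i}\|_\infty$, and using $\epsilon\le1$, produces exactly the inequality above.

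The step I expect to be the real obstacle is the passage to the limit in the telescoping — equivalently, controlling how fast the projected process $Y^\epsilon$ moves in $L^2$ — because this is precisely what upgrades the mere sign information $\EE_\PP[(X^\epsilon_r-Y^\epsilon_t)\cdot(Y^\epsilon_r-X^\epsilon_r)]\le0$ coming from the projection variational inequality of Proposition~\ref{prop:Monotone2} into the genuine dissipative term $-\tfrac2\epsilon W^2_2(\mu_\epsilon(r),K)$ that drives the whole estimate. It rests on the convexity built into \hypii (nonexpansiveness of the $W_2$-projection onto $K$, via Lemma~\ref{lem:ProjK}) and on the H\"older regularity of $X^\epsilon$ and $\mu_\epsilon$ already obtained while proving Theorem~\ref{thm:Main1}. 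Everything else (the It\^o formula on a frozen target, Young's inequality, the elementary integration) is routine.
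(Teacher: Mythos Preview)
Your argument is correct and follows essentially the same route as the paper: both fix the projected target $Y^\epsilon_t=T^{\mu_\epsilon(t)\rightarrow\mu^K_\epsilon(t)}(X^\epsilon_t)$, use $g(t+h)\le\EE_\PP[|X^\epsilon_{t+h}-Y^\epsilon_t|^2]$, and expand by It\^o's formula so that the penalization term produces the dissipative contribution $-\tfrac{2}{\epsilon}g$. The only differences are cosmetic---the paper divides by $h$ and passes to a one-sided differential inequality before integrating, whereas you telescope directly to the integral form; and your justification of the limit via the $L^2$-nonexpansiveness of the projection onto $\{Z:\mathrm{Law}(Z)\in K\}$ is a slightly cleaner way of handling the continuity of $t\mapsto Y^\epsilon_t$ than the paper's appeal to continuity.
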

The preceding lemma ensures that as $\epsilon$ decreases to $0$, the constraint \eqref{eq:WeakConstraint-marginal} in the sense that
\[
\mbox{for a.e. }t\in(0,T), \lim_{\epsilon\rightarrow 0}W_2(\mu_\epsilon(t),K)=0.
\]
\begin{proof} Observing that
\[
W^2_2(\mu_\epsilon(t+h),K)\leq \EE\left[\left|X^\epsilon_{t+h}-T^{\mu_\epsilon(t)\rightarrow \mu^K_\epsilon(t)}(X^\epsilon_t)\right|^2\right],
\]
 then the It\^o's formula applied to $s\mapsto \left|X^\epsilon_{t+h}-T^{\mu_\epsilon(t)\rightarrow \mu^K_\epsilon(t)}(X^\epsilon_t)\right|^2,\,s\in [t,t+h]$, yields
\begin{align*}
&\EE\left[\left|X^\epsilon_{t+h}-T^{\mu_\epsilon(t)\rightarrow \mu^K_\epsilon(t)}(X^\epsilon_t)\right|^2\right]\\
&\leq \EE\left[\left|X^\epsilon_{t}-T^{\mu_\epsilon(t)\rightarrow \mu^K_\epsilon(t)}(X^\epsilon_t)\right|^2\right]+2\EE\left[\int_{t}^{t+h}\left(X^\epsilon_s-T^{\mu_\epsilon(t)\rightarrow \mu^K_\epsilon(t)}(X^\epsilon_t)\right)\cdot\left(\frac{T^{\mu_\epsilon(s)\rightarrow \mu^K_\epsilon(s)}(X^\epsilon_s)-X^\epsilon_s}{\epsilon}\right)\,ds\right]\\
&\quad +2\EE\left[\int_t^{t+h}\left(X^\epsilon_s-T^{\mu_\epsilon(t)\rightarrow \mu^K_\epsilon(t)}(X^\epsilon_t)\right)\cdot b(s,X^\epsilon_s)\,ds\right]+\sum_{i=1}^d\EE \left[\int_t^{t+h} a^{i,i}(s,X^\epsilon_s)\,ds\right].
\end{align*}
Then, according to \hypcoef,
\begin{align*}
&W^2_2(\mu_\epsilon(t+h),K)\\
&\leq W^2_{2}(\mu_\epsilon(t),K)+ dh\max_{1\leq i\leq d}\Vert a^{i,i}\Vert_\infty
+2\EE\left[\int_t^{t+h}\left(X^\epsilon_s-T^{\mu_\epsilon(t)\rightarrow \mu^K_\epsilon(t)}(X^\epsilon_t)\right)\cdot b(s,X^\epsilon_s)\,ds\right]\\
& +2\EE\left[\int_{t}^{t+h}\left(X^\epsilon_s-T^{\mu_\epsilon(t)\rightarrow \mu^K_\epsilon(t)}(X^\epsilon_t)\right)\cdot\left(\frac{T^{\mu_\epsilon(s)\rightarrow \mu^K_\epsilon(s)}(X^\epsilon_s)-X^\epsilon_s}{\epsilon}\right)\,ds\right].
\end{align*}
Dividing the preceding inequality by $h$ and taking the limit $h\rightarrow 0^+$, and by further using
the continuity of $t\mapsto X^\epsilon_t$ and $t\mapsto T^{\mu_\epsilon(t)\rightarrow \mu^K_\epsilon(t)}(X^\epsilon_t)$, we get
\begin{align*}
\frac{d^+}{dt}W^2_2(\mu_\epsilon(t),K) -\frac{W^2_2(\mu_\epsilon(t),K)}{\epsilon}&\leq d\max_{1\leq i\leq d}\Vert a^{i,i}\Vert_\infty
+2\max_{1\leq i\leq d}\Vert b^{i}\Vert_\infty W_2(\mu_\epsilon(t),K)\\
&\leq d\max_{1\leq i\leq d}\Vert a^{i,i}\Vert_\infty
+2\max_{1\leq i\leq d}\Vert b^{i}\Vert_\infty (1+W^2_2(\mu_\epsilon(t),K)).
\end{align*}
Using Gronwall's lemma it follows that
\begin{align*}
\frac{d^+}{dt}W^2_2(\mu_\epsilon(t),K)+\frac{W^2_2(\mu_\epsilon(t),K)}{\epsilon}\leq C(T,\max_{1\leq i\leq d}\Vert b\Vert_{\infty},\max_{1\leq i\leq d}\Vert a^{i,i}\Vert_{\infty}).
\end{align*}
for $\frac{d^+}{dt}$ the right-hand side derivative in time. Integrating the preceding expression on $[0,t]$ gives the result.
\end{proof}
\subsection{Asymptotic behavior of \eqref{eq:PenalizedSDE}}
\begin{theorem}\label{thm:LimitSystem} Under assumptions \hypi to \hypiii, and given $(X^\epsilon_t,L^\epsilon_t;\,t\in[0,T])$, the solution of \eqref{eq:PenalizedSDE} constructed in Theorem \ref{thm:Main1}, there exists a subsequence $(X^{\epsilon_k}_t,L^{\epsilon_k}_t;\,t\in[0,T])$ which, as $\epsilon_k$ tends to $0$, converges in distribution to a couple of continuous stochastic process $(X^0_t,L^0_t;\,t\in[0,T])$ such that
\begin{equation}\label{eq:LimitSystem}
X^0_t=X^0_0+\int_0^tb(s,X^0_s)\,ds+\int_0^t\sigma(s,X^0_s)\,dB_s+L^0_t,\,X_0\sim \mu_0\\
\end{equation}
where $(B_t;\,t\in[0,T])$ is a $\er^d$-Brownian motion and  where $(L^0_t;\,t\in[0,T])$ is a continuous process with bounded variations such that
\begin{align*}
\EE_\PP\left[\int_0^T\left(Y_{s}-X^0_s\right)\cdot dL^0_r\right]\leq 0,
\end{align*}
for all continuous process $(Y_t;\,0\leq t\leq T)$ such that $\mbox{Law}(Y_t)\in K$, for all $0\leq t\leq T$.
\end{theorem}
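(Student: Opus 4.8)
The strategy is the classical tightness/limit argument for penalized SDEs, using the a priori bound of Lemma \ref{lem:ConvMargin} as the key quantitative input. First I would establish uniform (in $\epsilon$) moment estimates for $(X^\epsilon_t;\,0\leq t\leq T)$: since $\sup_{t}W_2^2(\mu_\epsilon(t),K)\leq C$ and $\tfrac1\epsilon\int_0^T W_2^2(\mu_\epsilon(s),K)\,ds\leq C$ by Lemma \ref{lem:ConvMargin}, and since $|L^\epsilon_t|$ and $|L^\epsilon|_t$ are controlled by $\tfrac1\epsilon\int_0^t W_2(\mu_\epsilon(s),K)\,ds$, a Burkholder--Davis--Gundy plus Gronwall argument (exactly as in the existence part of Section \ref{sec:PenalizedWellposedness}) gives $\sup_\epsilon \EE[\max_{t\in[0,T]}|X^\epsilon_t|^2]<\infty$ and, more importantly, a uniform bound on the total variation $\EE[|L^\epsilon|_T]$. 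Here assumption \hypiii enters: testing the constraint against a measure $\gamma\in\mathrm{Int}(K)$ and the admissible transports $x+rf(x)$, $\|f\|_\infty\le1$, one shows via Proposition \ref{prop:Monotone2} that the ``work'' $\EE[\int_0^T (Y_s-X^\epsilon_s)\cdot dL^\epsilon_s]\le 0$ for $Y$ with $\mathrm{Law}(Y_t)\in K$, and by choosing $Y_s = X^{\gamma}_s \pm r e_i$ appropriately one extracts a bound of the form $r\,\EE[|L^\epsilon|_T]\le \EE[\text{increments of }X^\epsilon]\le C$, giving $\sup_\epsilon\EE[|L^\epsilon|_T]<\infty$ with the constant independent of $\epsilon$.

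Next I would establish tightness of the laws of $(X^\epsilon_\cdot, L^\epsilon_\cdot, W_\cdot)$ in $\Cc([0,T];\er^{3d})$. For $X^\epsilon$ and $W$ this is standard from the moment bounds and the Kolmogorov--Chentsov criterion. For $L^\epsilon$, the uniform total-variation bound gives tightness in the sense of processes with bounded variation (pre-compactness in the uniform topology along sequences with equibounded variation, as in \cite{Slominski-13}); one verifies the modulus-of-continuity / Aldous-type condition using $\EE[|L^\epsilon_{t+\delta}-L^\epsilon_t|]=\tfrac1\epsilon\EE[\int_t^{t+\delta}W_2(\mu_\epsilon(s),K)\,\text{-type terms}]$, which needs a little care. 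Then I invoke Prokhorov and Skorohod representation to obtain, along a subsequence $\epsilon_k\to0$, a new probability space carrying $(X^{\epsilon_k},L^{\epsilon_k},W^{\epsilon_k})$ converging a.s. uniformly to $(X^0,L^0,B)$, with $B$ a Brownian motion; passing to the limit in the SDE (using continuity of $b,\sigma$ and uniform integrability from the $L^2$ bounds) yields \eqref{eq:LimitSystem}, and $L^0$ is continuous with bounded variation as the a.s.-uniform limit of processes with equibounded variation.

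Finally I would verify the variational inequality $\EE[\int_0^T(Y_s-X^0_s)\cdot dL^0_s]\le0$ for all continuous $Y$ with $\mathrm{Law}(Y_t)\in K$. At the penalized level, Proposition \ref{prop:Monotone2} gives precisely $\EE[(X^{\nu}-X^\epsilon_s)\cdot(T^{\mu_\epsilon(s)\to\mu^K_\epsilon(s)}(X^\epsilon_s)-X^\epsilon_s)]\le0$ whenever $\mathrm{Law}(X^\nu)\in K$, i.e. $\EE[\int_0^T(Y_s-X^\epsilon_s)\cdot dL^\epsilon_s]\le0$ since $dL^\epsilon_s=\tfrac1\epsilon(T^{\mu_\epsilon(s)\to\mu^K_\epsilon(s)}(X^\epsilon_s)-X^\epsilon_s)\,ds$; passing this inequality to the limit is the delicate point because $dL^\epsilon$ converges only weakly-$*$ as a measure and the integrand $Y_s-X^\epsilon_s$ converges only in law, so one must first transfer everything onto the Skorohod space and argue that $\int_0^T(Y_s-X^{\epsilon_k}_s)\cdot dL^{\epsilon_k}_s\to\int_0^T(Y_s-X^0_s)\cdot dL^0_s$ using a.s. uniform convergence of $X^{\epsilon_k}$ together with weak-$*$ convergence of the finite signed vector measures $dL^{\epsilon_k}$ and their equibounded total variation. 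Also (ii) of the analogous Theorem \ref{thm:Main2}, namely $\mathrm{Law}(X^0_t)\in K$ for all $t$, follows because $W_2(\mu_{\epsilon_k}(t),K)\to0$ for a.e. $t$ by Lemma \ref{lem:ConvMargin}, $W_2(\mu_{\epsilon_k}(t),\mathrm{Law}(X^0_t))\to0$ by the convergence and the uniform second moments, and $K$ is $W_2$-closed (closed for weak topology plus moment convergence); a continuity-in-$t$ argument upgrades ``a.e.\ $t$'' to ``all $t$''.

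I expect the main obstacle to be the limit passage in the variational inequality: reconciling the merely weak convergence of the signed measures $dL^{\epsilon_k}$ with the convergence in law (upgraded to a.s.\ via Skorohod) of the test processes, and in particular ruling out mass of $dL^{\epsilon_k}$ escaping to a set where $Y_s-X^{\epsilon_k}_s$ is not yet close to $Y_s-X^0_s$; the uniform total-variation bound coming from \hypiii is what makes this work, and keeping track of which probability space each object lives on after Skorohod's theorem is the bookkeeping one has to get right.
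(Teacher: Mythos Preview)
Your overall architecture (uniform moments via \hypi, total-variation control of $L^\epsilon$ via \hypiii and Proposition~\ref{prop:Monotone2}, Skorohod representation, then limit in the SDE and in the variational inequality) matches the paper. The use of \hypiii to extract $\sup_\epsilon\EE[|L^\epsilon|_T]<\infty$ is exactly Lemma~\ref{lem:Control}.

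There is however a genuine gap in your tightness step. A uniform bound on $\EE[|L^\epsilon|_T]$ does \emph{not} yield tightness of $(L^\epsilon)$ in $\Cc([0,T];\er^d)$: Helly-type compactness only gives weak-$*$ convergence of $dL^\epsilon$ and a priori a c\`adl\`ag limit with possible jumps. Your proposed modulus-of-continuity estimate $\EE[|L^\epsilon_{t+\delta}-L^\epsilon_t|]\le \tfrac1\epsilon\int_t^{t+\delta}(\cdots)$ cannot be made uniform in $\epsilon$ from Lemma~\ref{lem:ConvMargin}: Cauchy--Schwarz gives at best $\tfrac{\sqrt{\delta}}{\sqrt{\epsilon}}\sqrt{C}$, which blows up. Lemma~\ref{lem:Control} does not help either, since its right-hand side contains $\EE|X^\epsilon_t-Y|^2-\EE|X^\epsilon_{t+h}-Y|^2$, which is bounded but not uniformly small in $h$ without already knowing equicontinuity of $X^\epsilon$; the argument is circular because tightness of $X^\epsilon$ requires tightness of $L^\epsilon$.

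The paper breaks this circularity by a \emph{time-change} argument \`a la C\'epa \cite{Cepa-98} and Kurtz \cite{Kurtz-91}: set $\theta^\epsilon(t)=t+\EE[|L^\epsilon|_t]$, let $\tau^\epsilon$ be its inverse, and consider $\widehat{X}^\epsilon_t=X^\epsilon_{\tau^\epsilon(t)}$, $\widehat{L}^\epsilon_t=L^\epsilon_{\tau^\epsilon(t)}$, etc. After time change, $\widehat{L}^\epsilon$ is $1$-Lipschitz in expectation and $\widehat{A}^\epsilon$, $\widehat{M}^\epsilon$ inherit Lipschitz/H\"older bounds from the $1$-Lipschitz property of $\tau^\epsilon$, so tightness in $\Cc$ is immediate (Lemma~\ref{lem:Tightness}). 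One passes to the limit in the time-changed system, then undoes the time change using \cite[Lemma~2.3(b)]{Kurtz-91}, which also guarantees that the limit $L^0$ is continuous. The variational inequality is likewise passed to the limit in the time-changed variables and then pulled back. Your identification of the final passage as the delicate point is correct, but the time-change is what makes both the tightness and that passage tractable.
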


The proof of Theorem \ref{thm:LimitSystem} will be decomposed into three mains steps: the first step provides a uniform control of the penalization component, derived from \hypiii. The second step concern the tightness of $\PP\circ\left(X^\epsilon_t,L^\epsilon_t;\,0\leq t\leq T\right)$ using the Meyer-Zheng topology and in the last step, a limit point of the sequence has a solution to \eqref{eq:LimitSystem}.

\textbf{Step $1$:}

$\bullet$ \textbf{Uniform control of second moments:} Applying Itô's formula, we have
\begin{equation*}
\EE_\PP\left[|X^\epsilon_t-X_0|^2\right]=\int_0^t\EE_\PP\left[2 b(s,X^\epsilon_s)\cdot\left(X^\epsilon_s-X_0\right)+\sum_{i=1}^d a^{i,i}(s,X^\epsilon_s)\right]\,ds+2\int_0^t\EE_\PP\left[\left(X^\epsilon_s-X_0\right)\cdot dL^\epsilon_s\right].
\end{equation*}
Since $\mu_0\in K$, Proposition \ref{prop:Monotone2} ensures that, for all $0\leq t\leq T$,
\begin{equation*}
0\leq \EE_\PP\left[\left(X^\epsilon_t-X_0\right)\cdot \left(T^{\mu_\epsilon(t)\rightarrow\mu^K_\epsilon(t)}(X^\epsilon_t)-X^\epsilon_t\right)\right] +\EE_\PP\left[\left|X_0-T^{\mu_\epsilon(t)\rightarrow\mu^K_\epsilon(t)}(X^\epsilon_t)\right|^2\right]
\end{equation*}
from which we deduce that
\begin{align*}
0\leq\EE_\PP\left[\int_0^t\left(X^\epsilon_s-X_0\right)\cdot dL^\epsilon_s\right]
+\frac{1}{\epsilon}\int_0^t\EE_\PP\left[\left|X_0-T^{\mu_\epsilon(s)\rightarrow\mu^K_\epsilon(s)}(X^\epsilon_s)\right|^2\right]\,ds.
\end{align*}
Therefore
\begin{align*}
\EE_\PP\left[|X^\epsilon_t-X_0|^2\right]&\leq 2T\left(\max_{1\leq i\leq d}\Vert b^i\Vert_{\infty}+\max_{1\leq i\leq d}\Vert a^{i,i}\Vert_{\infty}\right)+
\int_0^t\EE_\PP\left[2 b(s,X^\epsilon_s)\cdot\left(X^\epsilon_s-X_0\right)+\sum_{i=1}^d a^{i,i}(s,X^\epsilon_s)\right]\,ds\\
&\quad +2\int_0^t\EE_\PP\left[\left|X^\epsilon_s-X_0\right|^2\right]\,ds.
\end{align*}
Applying Gronwall's lemma and since $\EE_\PP\left[|X_0|^2\right]<\infty$, we conclude that
\begin{equation}\label{eq:MomentEstimate}
\sup_{\epsilon>0}\max_{0\leq t\leq T}\EE_\PP\left[|X^\epsilon_t|^2\right]<\infty.
\end{equation}
$\bullet$ \textbf{Uniform control of the penalization component:}
Applying Proposition \ref{prop:Monotone2}, we have
\[
0\geq -\EE_\PP\left[\left(X^\nu-X^\mu\right)\cdot \left(X^{\mu^K}-X^\mu\right)\right]
\]
for any $\nu$ in $K$. Choosing $\nu=\gamma$ for $\gamma$ in $\mbox{Int}(K)$ and a representant $Y$ given by \hypiii, we have
\[
\EE_\PP\left[\left(Y-X^\mu\right)\cdot \left(X^{\mu^K}-X^\mu\right)\right]\geq -r\EE_\PP\left[Z\cdot \left(X^{\mu^K}-X^\mu\right)\right]
\]
for all r.v. $Z$ such that $|Z|\leq 1$. In particular,
\begin{equation}
\label{ProofLemControlStep1}
\EE_\PP\left[\left(Y-X^{\mu_\epsilon(t)}_t\right)\cdot \left(X^{\mu^K_\epsilon(t)}_t-X^{\mu_\epsilon(t)}_t\right)\right]\geq -r
\EE_\PP\left[Z_t\cdot \left(X^{\mu^K_\epsilon(t)}-X^{\mu_\epsilon(t)}_t\right)\right]
\end{equation}
for all adapted process $(Z_t;\,0\leq t\leq T)$ such that $|Z_t|\leq 1$.
From this estimate, we deduce that
\begin{lemma}\label{lem:Control} Assume that \hypiii holds true and let $(X^\epsilon_t,L^\epsilon_t;\,0\leq t\leq T)$ be the solution to \eqref{eq:PenalizedSDE} defined on some probability space $(\Omega,\Ff,\PP)$.  For $Y$ a random variable, also defined on $(\Omega,\Ff,\PP)$, distributed according to $\mathbf{\gamma}$ given in $\mbox{Int}(K)$, we have: for all $t,h>0$ such that $0\leq t\leq t+h\leq T$,
\begin{align*}
&\int_t^{t+h} \frac{1}{\epsilon}\EE\left[\left|T^{\mu_\epsilon(s)\rightarrow \mu^K_\epsilon}(X^\epsilon_s)-X^\epsilon_s\right|^2\right]
\,ds+r\int_t^{t+h}\Vert \frac{T^{\mu^K_\epsilon(s)\rightarrow \mu_\epsilon(s)}-Id}{\epsilon}\Vert_{L^{q}(\mu_\epsilon(s))}\,ds\\
&\leq \left(\EE \left|X^\epsilon_t-Y\right|^2-\EE \left|X^\epsilon_{t+h}-Y\right|^2+
\int_t^{t+h}\EE\left[b(s,X^\epsilon_s)\cdot \left(X^\epsilon_s-Y\right)+\sum_{i=1}^{d}a^{i,i}(s,X^\epsilon_s)\right]\,ds\right),
\end{align*}
where $Id$ is the identity function on $\er^d$: $Id(x)=x$.
\end{lemma}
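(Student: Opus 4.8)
The plan is to apply Itô's formula to $s\mapsto |X^\epsilon_s-Y|^2$ on the interval $[t,t+h]$, using that $Y$ is a fixed (time-independent) random variable, and then to control the cross term involving $dL^\epsilon_s$ from below using the interior assumption \hypiii through estimate \eqref{ProofLemControlStep1}. First I would write, by Itô's formula and since $X^\epsilon_s$ solves \eqref{eq:PenalizedSDEa}--\eqref{eq:PenalizedSDEb},
\begin{align*}
\EE\left[|X^\epsilon_{t+h}-Y|^2\right]-\EE\left[|X^\epsilon_t-Y|^2\right]
&=\int_t^{t+h}\EE\left[2b(s,X^\epsilon_s)\cdot(X^\epsilon_s-Y)+\sum_{i=1}^d a^{i,i}(s,X^\epsilon_s)\right]\,ds\\
&\quad+\frac{2}{\epsilon}\int_t^{t+h}\EE\left[\left(T^{\mu_\epsilon(s)\rightarrow\mu^K_\epsilon(s)}(X^\epsilon_s)-X^\epsilon_s\right)\cdot(X^\epsilon_s-Y)\right]\,ds,
\end{align*}
the stochastic integral having zero expectation thanks to the second-moment bound \eqref{eq:MomentEstimate}.

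The main work is the lower bound on the penalization term $\EE\left[\left(T^{\mu_\epsilon(s)\rightarrow\mu^K_\epsilon(s)}(X^\epsilon_s)-X^\epsilon_s\right)\cdot(X^\epsilon_s-Y)\right]$. I would split this into the ``self'' part and the ``interior'' part by writing, for a representant $X^{\mu^K_\epsilon(s)}_s=T^{\mu_\epsilon(s)\rightarrow\mu^K_\epsilon(s)}(X^\epsilon_s)$,
\[
\left(X^{\mu^K_\epsilon(s)}_s-X^\epsilon_s\right)\cdot(X^\epsilon_s-Y)
=-\left|X^{\mu^K_\epsilon(s)}_s-X^\epsilon_s\right|^2+\left(X^{\mu^K_\epsilon(s)}_s-X^\epsilon_s\right)\cdot(X^{\mu^K_\epsilon(s)}_s-Y),
\]
so that the negative square produces the first term $\tfrac1\epsilon\int_t^{t+h}\EE\left[|T^{\mu_\epsilon(s)\rightarrow\mu^K_\epsilon}(X^\epsilon_s)-X^\epsilon_s|^2\right]ds$ on the left-hand side. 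For the remaining term, \eqref{ProofLemControlStep1} (a consequence of Proposition \ref{prop:Monotone2} applied with $\nu=\gamma\in\mbox{Int}(K)$) gives
\[
\EE\left[\left(Y-X^\epsilon_s\right)\cdot\left(X^{\mu^K_\epsilon(s)}_s-X^\epsilon_s\right)\right]\geq -r\,\EE\left[Z_s\cdot\left(X^{\mu^K_\epsilon(s)}_s-X^\epsilon_s\right)\right]
\]
for every adapted $(Z_s)$ with $|Z_s|\leq 1$. Optimizing over such $Z_s$ — concretely, choosing $Z_s$ to be (minus) the unit vector in the direction of $X^{\mu^K_\epsilon(s)}_s-X^\epsilon_s$ where that vector is nonzero — turns the right-hand side into $-r\,\EE\left[\left|X^{\mu^K_\epsilon(s)}_s-X^\epsilon_s\right|\right]$. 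Rewriting $X^{\mu^K_\epsilon(s)}_s-X^\epsilon_s=-(T^{\mu^K_\epsilon(s)\rightarrow\mu_\epsilon(s)}-Id)(X^\epsilon_s)$ in the appropriate variable and recognizing $\EE[|\cdot|]=\|\cdot\|_{L^1(\mu_\epsilon(s))}$, one gets, after dividing by $\epsilon$, exactly the term $r\int_t^{t+h}\|(T^{\mu^K_\epsilon(s)\rightarrow\mu_\epsilon(s)}-Id)/\epsilon\|_{L^q(\mu_\epsilon(s))}\,ds$ claimed in the statement (with $q=1$; I would read $q$ off the paper's conventions, and the duality argument above is precisely why the norm appearing there is the $L^1$ one).

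Collecting the two contributions: moving the negative-square term to the left and the interior term to the left as well, and rearranging the Itô identity, yields
\begin{align*}
&\frac1\epsilon\int_t^{t+h}\EE\left[\left|T^{\mu_\epsilon(s)\rightarrow\mu^K_\epsilon}(X^\epsilon_s)-X^\epsilon_s\right|^2\right]ds
+r\int_t^{t+h}\left\|\frac{T^{\mu^K_\epsilon(s)\rightarrow\mu_\epsilon(s)}-Id}{\epsilon}\right\|_{L^q(\mu_\epsilon(s))}ds\\
&\leq \EE\left[|X^\epsilon_t-Y|^2\right]-\EE\left[|X^\epsilon_{t+h}-Y|^2\right]
+\int_t^{t+h}\EE\left[b(s,X^\epsilon_s)\cdot(X^\epsilon_s-Y)+\sum_{i=1}^d a^{i,i}(s,X^\epsilon_s)\right]ds,
\end{align*}
which is the assertion (absorbing the factor $2$ in front of the drift term into the constant, or carrying it along — I would match the paper's exact normalization). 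The only genuine obstacle is the duality/optimization step turning the adapted-$Z$ bound into an $L^1$-norm: one must check measurability of the optimal $Z_s$ (take $Z_s=-(X^{\mu^K_\epsilon(s)}_s-X^\epsilon_s)/|X^{\mu^K_\epsilon(s)}_s-X^\epsilon_s|$ on the set where the denominator is positive and $Z_s=0$ otherwise, which is adapted since all processes involved are) and that this choice is admissible in \eqref{ProofLemControlStep1}; everything else is a routine application of Itô's formula together with \hypcoef, Proposition \ref{prop:Monotone2}, and the moment bound \eqref{eq:MomentEstimate}.
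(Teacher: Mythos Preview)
Your overall strategy --- It\^o's formula applied to $s\mapsto |X^\epsilon_s-Y|^2$, the interior estimate \eqref{ProofLemControlStep1}, and the optimal choice of $Z_s$ as a sign/direction vector --- is exactly the paper's approach. The paper divides \eqref{ProofLemControlStep1} by $\epsilon$, integrates over $[t,t+h]$, identifies the right-hand side with the It\^o expansion of $\EE|X^\epsilon_\cdot-Y|^2$, and then takes $Z_s=\operatorname{sign}\bigl(X^\epsilon_s-T^{\mu_\epsilon(s)\rightarrow\mu^K_\epsilon(s)}(X^\epsilon_s)\bigr)$ to turn the left-hand side into the $L^1$-norm term.

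However, your additional splitting step contains a genuine error. After writing
\[
\bigl(X^{\mu^K_\epsilon(s)}_s-X^\epsilon_s\bigr)\cdot(X^\epsilon_s-Y)
=-\bigl|X^{\mu^K_\epsilon(s)}_s-X^\epsilon_s\bigr|^2+\bigl(X^{\mu^K_\epsilon(s)}_s-X^\epsilon_s\bigr)\cdot\bigl(X^{\mu^K_\epsilon(s)}_s-Y\bigr),
\]
you invoke \eqref{ProofLemControlStep1} ``for the remaining term''. But \eqref{ProofLemControlStep1} bounds $\EE\bigl[(Y-X^\epsilon_s)\cdot(X^{\mu^K_\epsilon(s)}_s-X^\epsilon_s)\bigr]$, which is the \emph{un-split} cross term, not the remaining piece $\EE\bigl[(X^{\mu^K_\epsilon(s)}_s-X^\epsilon_s)\cdot(X^{\mu^K_\epsilon(s)}_s-Y)\bigr]$. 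These are different quantities, and the substitution you make is not valid.

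Worse, the remaining piece has the wrong sign for your purpose. The projection inequality that underlies Proposition~\ref{prop:Monotone2} (its first displayed line) reads $\EE\bigl[(X^\mu-X^{\mu^K})\cdot(X^\nu-X^{\mu^K})\bigr]\geq 0$ for every $X^\nu$ with law in $K$; with $X^\nu=Y$ this gives
\[
\EE\bigl[(X^{\mu^K_\epsilon(s)}_s-X^\epsilon_s)\cdot(X^{\mu^K_\epsilon(s)}_s-Y)\bigr]\geq 0.
\]
So this term is nonnegative and cannot be bounded above by $-r\,\EE\bigl[|X^{\mu^K_\epsilon(s)}_s-X^\epsilon_s|\bigr]$, which is what you would need in order to move \emph{both} the squared term and the $r$-norm term to the left. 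In other words, one cannot extract $\tfrac{1}{\epsilon}\EE|T-X|^2$ and $r\|(T-\mathrm{Id})/\epsilon\|_{L^1}$ simultaneously from the single inequality \eqref{ProofLemControlStep1}; the two contributions pull in opposite directions after the split.

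The paper's proof simply does not perform the split: it applies \eqref{ProofLemControlStep1} directly to the cross term coming from It\^o's formula and obtains only the $r$-norm term on the left. (The squared term in the displayed statement is not produced by the paper's own argument and should be read as a slip; only the second summand is actually used downstream, in the tightness step.) If you drop the split and follow your own Step~1 and Step~3 verbatim with the sign/direction choice of $Z_s$, you recover precisely the paper's proof.
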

\begin{proof}
Dividing both sides of the \eqref{ProofLemControlStep1} by $\epsilon$ and integrating over $[t,t+h]$ for $0<t<T$, we obtain that
\begin{align*}
&-\frac{r}{\epsilon}\int_t^{t+h}\EE\left[ Z_s\cdot \left(T^{\mu_\epsilon(s)\rightarrow\mu^K_\epsilon(s)}(X^\epsilon_s)-X^\epsilon_s\right)\right]\,ds\\
&\leq \int_t^{t+h}\EE\left[\left(X^\epsilon_t-Y\right)\cdot \left(X^\epsilon_s-T^{\mu_\epsilon(s)\rightarrow\mu^K_\epsilon(s)}(X^\epsilon_s)\right)\right]\,ds.
\end{align*}
Applying It\^o's formula, the right-hand side of the above expression is equal to
\begin{align*}
&\EE\left[\left|X^\epsilon_t-Y\right|^2\right]-\EE\left[\left|X^\epsilon_{t+h}-Y\right|^2\right]\\
&-\int_t^{t+h}\EE\left[b(s,X^\epsilon_s)\cdot\left(\frac{T^{\mu_\epsilon(s)\rightarrow\mu^K_\epsilon(s)}(X^\epsilon_s)-X^\epsilon_s}{\epsilon}\right) + \sum_{i=1}^{d}a^{i,i}(s,X^\epsilon_s)\right]\,ds.
\end{align*}
For the left-hand side, choosing
\[
Z_t=\mbox{sign}\left(X^\epsilon_s-T^{\mu_\epsilon(s)\rightarrow\mu^K_\epsilon(s)}(X^\epsilon_s)\right)
\]
for $\mbox{sign}(x)$ the sign function, yields the estimate.
\end{proof}
\paragraph{Step $2$: Tightness result:} Following Lemma \ref{lem:Control}, we can deduce the tightness of the law $\{P^\epsilon:=\PP\circ\left(X^\epsilon_t,L^\epsilon_t;\,0\leq t\leq T\right)\}$. To this aim, we follow and slightly adapt the proof argument of \cite{Cepa-98}: Define
$$
\theta^\epsilon(t)=t+\EE_\PP\left[|L^\epsilon|_t\right]=t+\EE_\PP\left[\int_0^t\left|dL^\epsilon_t/dt\right|\,dr\right],\,0\leq t\leq T,
$$
its generalized inverse
$$
\tau^\epsilon(t)=\inf\left\{r>0\,|\,\theta^\epsilon(r)>t\right\},\,0\leq t\leq T,
$$
and the processes
$$
\widehat{X}^\epsilon_t=X^\epsilon_{\tau^\epsilon(t)},\,\widehat{L}^\epsilon_t=L^\epsilon_{\tau^\epsilon(t)},\,\widehat{A}^\epsilon_t=A^\epsilon_{\tau^\epsilon(t)},
\,\widehat{M}^\epsilon_t=M^\epsilon_{\tau^\epsilon(t)},\,0\leq t\leq T,
$$
where
$$
A^\epsilon_t=\int_0^tb(s,X^\epsilon_s)\,ds,\,M^\epsilon_t=\int_0^t\sigma(s,X^\epsilon_s)\,dW_s
$$
Owing to Lemma \ref{lem:Control} and  since $t\rightarrow T^{\mu_\epsilon(t)\rightarrow \mu^K_\epsilon(t)}(X^\epsilon_t)$ is continuous,
$\{\theta^\epsilon(t);\,0\leq t\leq T\}$ is a sequence of increasing continuous such that
\[
\sup_{\epsilon>0}\max_{0\leq t\leq T}|\theta_\epsilon(t)|\leq T+\sup_{\epsilon>0}\EE_\PP\left[|L^\epsilon|_T\right]<\infty
\]
and thus is relatively compact on the space of non-negative measures defined on $[0,T]$.
 since $t\rightarrow T^{\mu_\epsilon(t)\rightarrow \mu^K_\epsilon(t)}(X^\epsilon_t)$ is continuous, we have, for all $0\leq s\leq t\leq T$,
 $$
 \left|\tau_\epsilon(t)-\tau_\epsilon(s)\right|=\left|\int_s^t\frac{1}{1+\EE_\PP\left[|dL^\epsilon/dr|_{\tau^\epsilon(r)}\right]}\,dr\right|\leq t-s
 $$
so that $\{\theta^\epsilon(t);\,0\leq t\leq T\}_{\epsilon>0}$ is a family of equi-continuous functions on $\Cc([0,T];[0,\infty))$ and thus is relatively compact in $\Cc([0,T];[0,\infty))$.

We further can observe that, for all $\beta>0$, for all $\epsilon>0$,
\begin{align*}
\EE_\PP\left[\max_{0\leq s,t\leq T,\,|t-s|\leq \beta}\left|\widehat{A}^\epsilon_t-\widehat{A}^\epsilon_s\right|\right]&=
\EE_\PP\left[\max_{0\leq s,t\leq T,\,|t-s|\leq \beta}\left|\int_{\tau_\epsilon(s)}^{\tau_\epsilon(t)} b(r,X^\epsilon_r)\,dr\right|\right]\\
&\leq \max_{1\leq i\leq d}\Vert b^i\Vert_{\infty}\max_{0\leq s,t\leq T,\,|t-s|\leq \beta}\left|\tau_\epsilon(t)-\tau_\epsilon(t)\right|\leq \sup_{1\leq i\leq d}\Vert b^i\Vert_{\infty}\beta,
\end{align*}
and
\begin{align*}
\EE_\PP\left[\max_{0\leq s,t\leq T,\,|t-s|\leq \beta}\left|\widehat{M}^\epsilon_t-\widehat{M}^\epsilon_s\right|\right]
&=\EE_\PP\left[\max_{0\leq s,t\leq T,\,|t-s|\leq \beta}\left|\int_{\tau_\epsilon(s)}^{\tau_\epsilon(t)}  \sigma(r,X^\epsilon_r)\,dW_r\right|\right] \\
&\leq \sqrt{\max_{1\leq i,j\leq d}\Vert a^{i,j}\Vert}\sqrt{\left|\tau_\epsilon(t)-\tau_\epsilon(t)\right|}\leq \sqrt{\max_{1\leq i,j\leq d}\Vert a^{i,j}\Vert}\beta.
\end{align*}
We also observe that
\begin{align*}
\EE_\PP\left[ \left|\widehat{L}^\epsilon_t-\widehat{L}^\epsilon_s\right|\right]&=
\EE_\PP\left[\left|\int_{\tau^\epsilon(s)}^{\tau^\epsilon(t)}dL^\epsilon_s\right|\right]=
\EE_\PP\left[\left|\int_{s}^{t}\tau^\epsilon(r) dL^\epsilon/dr_{\tau^\epsilon(r)}\,dr\right|\right]\\
&= \EE_\PP\left[\left|\int_s^t \frac{dL^\epsilon/dr_{\tau^\epsilon(r)}}{1+\EE_\PP\left[|dL^\epsilon/dr_{\tau^\epsilon(r)}|\right]}\,dr\right|\right]\leq |t-s|.
\end{align*}
where the second equality follows by a change of variables. Combining the two previous estimates, we get that
\begin{lemma}\label{lem:Tightness}
The family of probability measures $\{P^\epsilon\}_{\epsilon>0}$ generated by $(\widehat{X}^\epsilon_t,\,\widehat{L}^\epsilon_t,\,\widehat{A}^\epsilon_t,\,\widehat{M}^\epsilon_t;\,0\leq t\leq T)$ is tight in $\Cc([0,T];\er^{5d})$.
\end{lemma}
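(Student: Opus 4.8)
The plan is to obtain Lemma~\ref{lem:Tightness} from Prokhorov's theorem via the classical modulus-of-continuity criterion for $\Cc([0,T];\er^{m})$: the laws of a family of continuous processes $Z^\epsilon$ are relatively compact if the laws of $Z^\epsilon_0$ form a tight family and, for every $\eta>0$,
\[
\lim_{\beta\downarrow 0}\ \limsup_{\epsilon\downarrow 0}\ \PP\!\left(\sup_{0\le s\le t\le T,\ t-s\le\beta}\left|Z^\epsilon_t-Z^\epsilon_s\right|\ge\eta\right)=0 .
\]
Since tightness of each of the finitely many coordinate processes implies tightness of the vector they form, it suffices to verify this criterion separately for $\widehat X^\epsilon$, $\widehat L^\epsilon$, $\widehat A^\epsilon$ and $\widehat M^\epsilon$. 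The requirement at $t=0$ is immediate: $\tau^\epsilon(0)=0$, so $\widehat X^\epsilon_0=X_0\sim\mu_0$, a single fixed probability measure, while $\widehat A^\epsilon_0=\widehat M^\epsilon_0=\widehat L^\epsilon_0=0$.

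For the drift and martingale components I would simply feed the two increment estimates established just above the lemma into Markov's inequality: from $\EE_\PP\big[\sup_{t-s\le\beta}|\widehat A^\epsilon_t-\widehat A^\epsilon_s|\big]\le(\max_{1\le i\le d}\Vert b^i\Vert_\infty)\,\beta$ one gets, uniformly in $\epsilon$, $\PP\big(\sup_{t-s\le\beta}|\widehat A^\epsilon_t-\widehat A^\epsilon_s|\ge\eta\big)\le(\max_{1\le i\le d}\Vert b^i\Vert_\infty)\,\beta/\eta$, which vanishes as $\beta\downarrow 0$; the same computation applied to the estimate for $\widehat M^\epsilon$ disposes of $\widehat M^\epsilon$. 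Moreover $\widehat X^\epsilon=X_0+\widehat A^\epsilon+\widehat M^\epsilon+\widehat L^\epsilon$ with $X_0$ constant in $t$, so the modulus of continuity of $\widehat X^\epsilon$ is dominated by the sum of those of $\widehat A^\epsilon$, $\widehat M^\epsilon$ and $\widehat L^\epsilon$, and the case of $\widehat X^\epsilon$ reduces to that of $\widehat L^\epsilon$. All of this is routine once the displayed estimates are granted.

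The genuine difficulty — and the place where the time change $\theta^\epsilon$ earns its keep — is the component $\widehat L^\epsilon$, for which the estimate available from the excerpt is of $L^1$/in-expectation type, namely $\EE_\PP[|\widehat L^\epsilon_t-\widehat L^\epsilon_s|]\le|t-s|$, rather than a bound on the whole oscillation. Here the plan is: first, integrate the estimate of Lemma~\ref{lem:Control} over $[0,T]$ (taking $t=0$, $h=T$, and $\gamma\in\mbox{Int}(K)$, which is admissible because $\mu_0\in K$) and invoke the moment bound \eqref{eq:MomentEstimate} to obtain $\sup_{\epsilon>0}\EE_\PP[|L^\epsilon|_T]<\infty$; consequently $\sup_{\epsilon>0}\max_{0\le t\le T}\theta^\epsilon(t)<\infty$ and $\{\theta^\epsilon\}$, $\{\tau^\epsilon\}$ are relatively compact families of deterministic $1$-Lipschitz functions on $[0,T]$. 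Second, use the time-change identity $\widehat L^\epsilon_t-\widehat L^\epsilon_s=\int_{\tau^\epsilon(s)}^{\tau^\epsilon(t)}dL^\epsilon_r$, whence the pathwise domination $|\widehat L^\epsilon_t-\widehat L^\epsilon_s|\le |L^\epsilon|_{\tau^\epsilon(t)}-|L^\epsilon|_{\tau^\epsilon(s)}$; taking expectations and using $\theta^\epsilon(\tau^\epsilon(r))=r$ yields $\EE_\PP\big[|L^\epsilon|_{\tau^\epsilon(t)}-|L^\epsilon|_{\tau^\epsilon(s)}\big]=(t-s)-(\tau^\epsilon(t)-\tau^\epsilon(s))$, which is controlled by the increment of the deterministic $1$-Lipschitz map $t\mapsto t-\tau^\epsilon(t)$. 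The crux is then to upgrade this to a uniform (in $\epsilon$) control of $\sup_{t-s\le\beta}\big(|L^\epsilon|_{\tau^\epsilon(t)}-|L^\epsilon|_{\tau^\epsilon(s)}\big)$ in probability; this is precisely the property the rescaling of time by $\theta^\epsilon$ is designed to guarantee, following the strategy of \cite{Cepa-98}, and it yields the modulus criterion for the nondecreasing process $t\mapsto|L^\epsilon|_{\tau^\epsilon(t)}$, hence for $\widehat L^\epsilon$, hence for $\widehat X^\epsilon$.

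With the modulus criterion verified for each coordinate, Prokhorov's theorem gives that $\{P^\epsilon\}_{\epsilon>0}$ is tight in $\Cc([0,T];\er^{5d})$, which is the assertion of the lemma. I expect the only non-routine step to be this last upgrade for the $\widehat L^\epsilon$-component — passing from the in-expectation increment bound, together with the uniform bound on the total variation and the relative compactness of $\{\tau^\epsilon\}$, to equicontinuity in probability — while every other ingredient follows directly from Markov's inequality and the estimates already established in the run-up to the lemma.
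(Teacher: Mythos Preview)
Your proposal follows the same approach as the paper: the paper's argument for Lemma~\ref{lem:Tightness} consists precisely of the three displayed increment estimates you quote (for $\widehat A^\epsilon$, $\widehat M^\epsilon$, and $\widehat L^\epsilon$) together with the $1$-Lipschitz property of $\tau^\epsilon$, followed by the one-line claim ``Combining the two previous estimates, we get that'' the family is tight. You are in fact more scrupulous than the paper in flagging that the $\widehat L^\epsilon$ estimate is only an in-expectation bound $\EE_\PP[|\widehat L^\epsilon_t-\widehat L^\epsilon_s|]\le|t-s|$ for fixed $s,t$ and that the passage to a uniform modulus-of-continuity control is the one nontrivial step; the paper simply asserts tightness and defers to \cite{Cepa-98}, exactly as you do.
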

\textbf{Step 3: Identification of the limit system}
Due to the preceding tightness result and using the Skorokhod representation theorem, we can extract a filtered probability space $(\Omega,\Ff,\PP)$ and a converging subsequence $\{\left(\widehat{X}^{\epsilon_k}_t,\,\widehat{L}^{\epsilon_k}_t,\,\widehat{R}^{\epsilon_k}_t,\,W_t;\,0\leq t\leq T\right)\}_{k\in \NN}$ such that $\PP$-a.s., as $\epsilon_k$ tends to $0$,
$$
\left(\widehat{X}^{\epsilon_k}_t,\,\widehat{L}^\epsilon_t,\,\widehat{A}^\epsilon_t,\,\widehat{M}^\epsilon_t;\,0\leq t\leq T\right)
$$
converges, uniformly in $[0,T]$, to
$$
\left(\widehat{X}^0_t,\,\widehat{L}^0_t,\,\widehat{A}^0_t,\,\widehat{M}^0_t\,;\,0\leq t\leq T\right).
$$
Let us further define $(\theta_0(t);\,0\leq t\leq T)$ and $(\tau_0(t);\,0\leq t\leq T)$ the corresponding limit points (up to the extracting of a further sub-sequence) of $(\theta_{\epsilon_k}(t);\,0\leq t\leq T)$ and $(\tau_{\epsilon_k}(t);\,0\leq t\leq T)$. $t\in[0,T]\mapsto\theta_0(t)$ is an increasing Lipschitz continuous function
(with $\sup_{t\neq s}|\theta_0(t)-\theta_0(s)|/|t-s|=1$) and $t\in[0,T]\mapsto\tau_0(t)$ is an increasing continuous function. Additionally $\tau_0(0)=0=\theta_0(0)$ and $\tau_0$ remains the generalized inverse of $\theta_0$. We denote below $d\tau_0$ and $d\theta_0$ the finite measure on $[0,T]$ generated by $\tau_0$ and $\theta_0$ respectively, with satisfy
\[
\lim_{\epsilon\rightarrow 0}\int_0^Tf(s)d\tau_\epsilon(s)=\int_0^Tf(s)d\tau_0(s),\,\,\lim_{\epsilon\rightarrow 0}\int_0^Tf(s)d\theta_\epsilon(s)=\int_0^Tf(s)d\theta_0(s),
\]
for all continuous function $f:[0,T]\rightarrow \er$.

Let us now define the processes $\left(X^0_t,L^0_t,A^0_t,M^0_t;\,0\leq t\leq T\right)$ as
$$
X^0_t=\widehat{X}^0_{\theta^0(t)},\,L^0_t=\widehat{L}^0_{\theta^0(t)},\,A^0_t=\widehat{A}^0_{\theta^0(t)},\,M^0_t=\widehat{M}^0_{\theta^0(t)}\,0\leq t\leq T,
$$
Since $\tau_0$ is increasing, there is no interval $[s,t]\subset [0,T]$ on which $\tau^0$ is constant. According to [Kurtz \cite{Kurtz-91}, Lemma 2.3 (b)] this ensures that, $\PP$-a.s.
\begin{equation}\label{eq:Convergence}
\lim_{\epsilon\rightarrow 0}\left(X^\epsilon_t,L^\epsilon_t,A^\epsilon_t,M^\epsilon_t;\,0\leq t\leq T\right)=\left(X^0_t,L^0_t,A^0_t,M^0_t;\,0\leq t\leq T\right)
\end{equation}
where the converge holds true in the space of c\`adl\`ag functions equipped with the Skorohod topology. As the limit processes have all continuous paths,
the convergence still holds true in $\Cc([0,T];\er^{4d})$. \eqref{eq:Convergence} ensures that
\begin{equation*}
X^0_t=X^0_0+A^0_t+M^0_t+L^0_t,\,0\leq t\leq T.
\end{equation*}
and, since $b$ and $\sigma$ are continuous and bounded, \eqref{eq:Convergence} also ensures that, for all $0\leq t\leq T$
\[
A^0_t=\lim_{\epsilon\rightarrow 0}\int_0^tb(s,X^\epsilon_s)\,ds=\int_0^t b(s,X^0_s)\,ds
\]
and
\[
M^0_t=\lim_{\epsilon\rightarrow 0}\int_0^t \sigma(s,X^\epsilon_s)\,dW_s=\int_0^t \sigma(s,X^0_s)\,dW_s.
\]
According to Lemma \ref{lem:ConvMargin}, $\mbox{Law}(X^\epsilon_t)$ belongs to $K$ for all $0\leq t\leq T$. Hence, it remains to show that
  $(L^0_t;\,0\leq t\leq T)$ is a continuous process with finite variations, such that,
\[
0\geq -\EE_\PP\left[\int_0^T \left(Y_s-X^0_s\right)\cdot dL^0_s\right]
\]
for all continuous process $(Y_t;\,0\leq t\leq T)$ such that, for all $0\leq t\leq T$, $\mbox{Law}(Y_t)$ is in $K$. To this aim, let us observe that for $|\widehat{L}^\epsilon|$
\[
\sup_{\epsilon>0}\EE_\PP\left[|\widehat{L}^\epsilon|_T\right]\leq T
\]
where $|\widehat{L}^\epsilon|$ is the total variations on $[0,T]$. Therefore,
the composition $(\widehat{L}^0_{\theta^0(t)};,t\leq T)$ is a continuous process with finite variations.

In addition,  for all $0\leq s\leq t\leq T$,
\begin{align*}
0\geq -\EE_\PP\left[\int_{\theta_\epsilon(s)}^{\theta_\epsilon(t)}\left(Y_r-X^\epsilon_r\right)\cdot dL^\epsilon_r\right]+\frac{1}{\epsilon}\EE_\PP\left[\int_{\theta_\epsilon(s)}^{\theta_\epsilon(t)}\left|Y_s-T^{\mu_\epsilon(r)\rightarrow \mu^K_\epsilon(r)}(X^\epsilon_r)\right|^2\,dr\right]
\end{align*}
gives, by change of variables, that
\begin{align*}
0\geq -\EE_\PP\left[\int_{s}^{t}\left(Y_{\tau_\epsilon(r)}-\widehat{X}^\epsilon_r\right)\cdot d\widehat{L}^\epsilon_r\right]+\frac{1}{\epsilon}\EE_\PP\left[\int_{s}^{t}\left|Y_{\tau_\epsilon(r)}-T^{\mu_\epsilon(\tau_\epsilon(r))\rightarrow \mu^K_\epsilon(\tau_\epsilon(r))}(\widehat{X}^\epsilon_r)\right|^2\,dr\right].
\end{align*}
Since $(Y_t;\,0\leq t\leq T)$ has continuous paths, taking the limit $\epsilon\rightarrow 0$ yields
\begin{align*}
0\geq -\EE_\PP\left[\int_{s}^{t}\left(Y_{\tau_0(r)}-\widehat{X}^0_r\right)\cdot d\widehat{L}^0_r\right].
\end{align*}
and furthermore
\begin{align*}
0\geq -\EE_\PP\left[\int_{\theta_0(s)}^{\theta_0(t)}\left(Y_{\tau_0(r)}-\widehat{X}^0_r\right)\cdot d\widehat{L}^0_r\right]=-\EE_\PP\left[\int_s^ t\left(Y_{r}-X^0_r\right)\cdot dL^0_r\right].
\end{align*}
which concludes the proof of Theorem \ref{thm:Main2}.

For the last property, observe that, for all $0\leq s\leq t\leq T$,
\begin{align*}
0\leq -\EE_\PP\left[\int_{\theta_\epsilon(s)}^{\theta_\epsilon(t)}\left(Y_r-X^\epsilon_r\right)\cdot dL^\epsilon_r\right]+\frac{1}{\epsilon}\EE_\PP\left[\int_{\theta_\epsilon(s)}^{\theta_\epsilon(t)}\left|Y_s-T^{\mu_\epsilon(r)\rightarrow \mu^K_\epsilon(r)}(X^\epsilon_r)\right|^2\,dr\right]
\end{align*}
gives, by change of variables, that
\begin{align*}
0\leq -\EE_\PP\left[\int_{s}^{t}\left(Y_{\tau_\epsilon(r)}-\widehat{X}^\epsilon_r\right)\cdot d\widehat{L}^\epsilon_r\right]+\frac{1}{\epsilon}\EE_\PP\left[\int_{s}^{t}\left|Y_{\tau_\epsilon(r)}-T^{\mu_\epsilon(\tau_\epsilon(r))\rightarrow \mu^K_\epsilon(\tau_\epsilon(r))}(\widehat{X}^\epsilon_r)\right|^2\,dr\right].
\end{align*}
Since $(Y_t;\,0\leq t\leq T)$ has continuous paths, taking the limit $\epsilon\rightarrow 0$ yields
\begin{align*}
0\leq -\EE_\PP\left[\int_{s}^{t}\left(Y_{\tau_0(r)}-\widehat{X}^0_r\right)\cdot d\widehat{L}^0_r\right].
\end{align*}
and furthermore
\begin{align*}
0\leq -\EE_\PP\left[\int_{\theta_0(s)}^{\theta_0(t)}\left(Y_{\tau_0(r)}-\widehat{X}^0_r\right)\cdot d\widehat{L}^0_r\right]=\EE_\PP\left[\int_s^ t\left(Y_{r}-X^0_r\right)\cdot dL^0_r\right].
\end{align*}
This ends the proof of Theorem \ref{thm:Main2}.
\begin{footnotesize}

\end{footnotesize}
\end{document}